\newtheorem{theorem}{Theorem}
\newtheorem{lemma}[theorem]{Lemma}
\theoremstyle{definition}
\newtheorem{example}[theorem]{Example}
\newtheorem{remark}[theorem]{Remark}
\newtheorem{definition}[theorem]{Definition}
\newcommand{\CC}{{\mathbb C}}
\newcommand{\rank}{{\mbox{\rm rank\,}}}
\newcommand{\diag}{{\mbox{\rm diag}}}
\newcommand{\la}{\lambda}
\newcommand{\orb}{{\cal O}}
\newcommand{\bun}{{\cal B}}
\newcommand{\gln}{{\rm GL}_n(\CC)}
\newcommand{\glm}{{\rm GL}_m(\CC)}
\title{On bundles of matrix pencils under strict equivalence\footnote{ This work has been supported by the Agencia Estatal de Investigaci\'on of Spain through grants PID2019-106362GB-I00 MCIN/
AEI/10.13039/501100011033/ and MTM2017-90682-REDT, and by the Madrid Government (Comunidad de Madrid-Spain) under the Multiannual Agreement with UC3M in the line of Excellence of University Professors (EPUC3M23), and in the context of the V PRICIT (Regional Programme of Research and Technological Innovation).}}
\author{Fernando De Ter\'an\thanks{Departamento de Matem\'aticas, Universidad Carlos III de Madrid, Avda. Universidad 30, 28911 Legan\'es, Spain. \text{fteran@math.uc3m.es}},  Froil\'an M. Dopico\thanks{Departamento de Matem\'aticas, Universidad Carlos III de Madrid, Avda. Universidad 30, 28911 Legan\'es, Spain. \text{dopico@math.uc3m.es}}}
\date{\today}
\begin{document}

\maketitle

\begin{abstract}
   Bundles of matrix pencils (under strict equivalence) are sets of pencils having the same Kronecker canonical form, up to the eigenvalues (namely, they are an infinite union of orbits under strict equivalence). The notion of bundle for matrix pencils was introduced in the 1990's, following the same notion for matrices under similarity, introduced by Arnold in 1971, and it has been extensively used since then. Despite the amount of literature devoted to describing the topology of bundles of matrix pencils, some relevant questions remain still open in this context. For example, the following two: (a) provide a characterization for the inclusion relation between the closures (in the standard topology) of bundles; and (b) are the bundles open in their closure? The main goal of this paper is providing an explicit answer to these two questions. In order to get this answer, we also review and/or formalize some notions and results already existing in the literature. We also prove that bundles of matrices under similarity, as well as bundles of matrix polynomials (defined as the set of $m\times n$ matrix polynomials of the same grade having the same spectral information, up to the eigenvalues) are open in their closure.
\end{abstract}

\noindent{\bf Keywords}. Matrix, matrix pencil, matrix polynomial, spectral information, strict equivalence, Kronecker canonical form, Jordan canonical form, orbit, bundle, open set, closure, majorization.

\noindent{\bf AMS Subject Classification}. 15A22, 15A18, 15A21, 15A54, 65F15.

\section{Introduction}\label{intro_sec}

Orbits of matrices and matrix pencils arise as a natural object when dealing with equivalence relations and their canonical forms. Orbits of matrices under similarity were introduced by Arnold in \cite{arnold1971}, and in this case the canonical form is the well-known {\em Jordan Canonical Form} (JCF). More precisely, the orbit (under similarity) of a given matrix consists of all matrices with the same JCF. In other words, it is the orbit of the given matrix (with size, say, $n\times n$) under the action of similarity of the general linear group $\gln$, consisting of all $n\times n$ invertible matrices with complex entries, on the set of all $n\times n$ matrices with complex entries, $\CC^{n\times n}$,  namely 
$$
\begin{array}{ccc}
     \gln\times \CC^{n\times n}&\rightarrow&\CC^{n\times n}  \\
     (P,A)&\mapsto&PAP^{-1}.
\end{array}
$$
In the case of general (unstructured) matrix pencils (namely, pairs of matrices of size $m\times n$), similarity is replaced by the so-called {\em strict equivalence}, that is
$$
\begin{array}{ccc}
     \glm\times\gln\times \CC^{m\times n}\times\CC^{m\times n} &\rightarrow&\CC^{m\times n}\times \CC^{m\times n}  \\
     (P,Q,A,B)&\mapsto&(PAQ,PBQ),
\end{array}
$$
and the canonical form under this relation is the {\em Kronecker Canonical Form} (KCF) (see \cite[Ch. XII, \S5]{gant}). Thus the orbit of a given pencil consists of all pencils with the same KCF.

The notion of orbit has some limitations when studying the change of the spectral information, because the eigenvalues of the matrix or the matrix pencil must be fixed. But if, for instance, we are interested in analyzing the change of the spectral information under small changes in the entries of the matrices $A$ and $B$ above, we should allow the eigenvalues to change, since this is what happens under generic perturbations. In order to overcome this limitation, Arnold introduced the notion of {\em bundle} in his 1971 paper \cite{arnold1971}. A bundle is the union (usually infinite) of all orbits that have the same JCF (or the same KCF for pencils) up to the specific values of the eigenvalues. More precisely, and taking the words by Arnold for the case of matrices, ``a bundle is the set of all matrices whose
Jordan normal forms differ only by their eigenvalues, but for which the sets
of distinct eigenvalues and the orders of the Jordan blocks are the same". For instance, for the matrix
$$
A=\left[\begin{array}{cc|c}\lambda_0&1&0\\0&\lambda_0&0\\\hline 0&0&\mu_0\end{array}
\right],
$$
consisting of one Jordan block with size $2$ associated with the eigenvalue $\lambda_0$ and one Jordan block with size $1$ associated with the eigenvalue $\mu_0$, with $\lambda_0\neq\mu_0$, the corresponding bundle is the set
$$
{\cal B}(A):=\left\{P\left[\begin{array}{cc|c}\lambda&1&0\\0&\lambda&0\\\hline 0&0&\mu\end{array}
\right]P^{-1}:\ \lambda,\mu\in\CC,\ \lambda\neq\mu,\ P\in\mbox{GL}_3(\CC)\right\}.
$$
The notion of bundle can be easily extended to matrix pencils, namely a bundle is the set of matrix pencils with the same KCF, up to the eigenvalues. Bundles of matrix pencils have been considered in many papers, mostly in the last $30$ years, like \cite{ddd-pencils,dk14,eek1,eek2,ejk,ejk2003,pervouchine,starcic}. Some of these references deal with bundles of {\em structured} pencils, namely those enjoying some particular symmetry in the coefficient matrices, $A$ and $B$, of the pencil $(A,B)$ (like {\em alternating}, {\em (skew-) Hermitian}, {\em (anti-)palindromic}, or {\em (skew-)symmetric}, see, for instance, \cite{4m-good} for the definition of all these structures), and in these cases the strict equivalence relation is replaced by the congruence or $*$-congruence relation. Even bundles of unstructured matrices under the relation of congruence (or $*$-congruence) have been considered in the literature, mostly arising from structured matrix pencils like, for instance, in \cite{dd11,dfs14,dfkks}. However, structured pencils are out of the analysis carried out in the present work, and we consider only general (unstructured) matrix pencils.

The canonical forms mentioned above (namely, the JCF for matrices or the KCF for matrix pencils) are the representatives of the equivalence classes of matrices or matrix pencils under the action of similarity or strict equivalence (or congruence/$*$-congruence in the structured case), respectively, (see, for instance, \cite{eek2} for more information on these canonical forms in the unstructured case, and \cite{thompson} for structured matrix pencils). A relevant question, not only theoretically, but also for applied and numerical purposes (for instance, in the computation of the canonical forms) is to determine which are the most ``likely" representatives of these canonical forms. Or, more precisely, given two different representatives of the canonical form, to determine whether one of them is more ``likely" than the other or not. In order to answer this question, the standard approach, that has been followed, for instance, in \cite{ddd-pencils,ddd-polys,d17,dd17,dd-skew,dfkks,dk14,eek1,eek2,ejk,pervouchine} is to translate it to the context of orbits or, more in general, of bundles. To be more precise, and focusing on strict equivalence of matrix pencils, each bundle is associated with a particular pencil $L(\la)$ or, more in general, with the KCF of $L(\la)$, which is determined up to the values of the eigenvalues. Then, given two bundles $\bun(L_1)$ and $\bun(L_2)$, we say that the KCF of $L_1(\la)$ is ``more likely" (or ``more generic") than the one of $L_2(\la)$ if the closure of the bundle of $L_2(\la)$ is included in the closure of the bundle of $L_1(\la)$ (namely $\overline\bun(L_2)\subseteq\overline\bun(L_1)$, in the notation that is used throughout the manuscript), where the closures are considered in the standard topology of $\CC^{m\times n}\times\CC^{m\times n}$, which is identified with $\CC^{2mn}$. Therefore, the inclusion relations of bundle closures determine the ``likelihood" of the corresponding canonical forms. 

For the relation of similarity of matrices (namely, the likelihood of the JCF), a characterization of the inclusion relation between orbit closures is known since the 1980's \cite{bt80,mp83}. A characterization for the inclusion relation between orbit closures of matrix pencils under strict equivalence was obtained in \cite{bongartz} and \cite{pokrzywa}, and for bundle closures of matrices under similarity, a characterization for the inclusion relation is presented in \cite[Th. 2.6]{eek2}. However, we have been unable to find in the literature any explicit characterization of the inclusion relation between bundle closures of matrix pencils under strict equivalence, even though Theorem 3.3 in \cite{eek2} provides an explicit characterization of the so-called ``covering" relation between bundle closures of matrix pencils (where {\em covering} means that $\overline\bun(L_2)\subseteq\overline\bun(L_1)$ and there is no any $L_3$ such that $\overline\bun(L_2)\subset \overline\bun(L_3)\subset\overline\bun(L_1)$, and $\subset$ means strict inclusion).

In the arguments described at the end of the last-but-one paragraph, it is implicitly assumed that the KCF of $L(\la)$ is the ``generic" one in $\overline\bun(L)$. Let us recall that a subset, ${\cal S}_0$, of a given set $\cal S$ (in a topological space) is called {\em generic in $\cal S$} if ${\cal S}_0$ is open and dense in $\cal S$. Therefore, the assumption on the genericity of the KCF just mentioned is equivalent to say that $\bun(L)$ is open and dense in its closure. Clearly, $\bun(L)$ is dense in $\overline\bun(L)$. The property of being open in its closure is well-known for orbits of varieties under the action of a group (see, for instance, \cite[p. 60]{humphreys}). This is the case, for instance, of matrices under similarity and matrix pencils under strict equivalence and congruence. However, we have not found in the literature a corresponding result for bundles so, up to our knowledge, the question on whether the bundles are open in their closure, is still open.

Summarizing, the following two relevant questions arise:
\begin{enumerate}
    \item[Q1.] To provide a characterization for the relation $\overline\bun(L_2)\subseteq\overline\bun(L_1)$ to hold, for two given matrix pencils $L_1$ and $L_2$.
    \item[Q2.] To prove that bundles are open in their closure.
\end{enumerate}

The main goal of the present work is to provide an answer to the previous two questions. To get this answer, we revisit some notions that are already in the literature, like the notion of ``coalescence" of eigenvalues, as well as some results, like Theorem 7.5 in \cite{pervouchine}, which is proven to be false.

As an aside result, we are also able to prove that bundles of matrices under similarity, as well as bundles of matrix polynomials, defined as sets of matrix polynomials with the same size and grade (see Section \ref{matrixbun_sec} for this notion) having the same spectral information, up to the specific values of their eigenvalue (see Section \ref{polybun_sec}), are also open in their closure.

The rest of the paper is organized as follows. In Section \ref{notation_sec} we introduce the notation and basic notions used throughout the paper, together with some already known results that are used later. Section \ref{inclusion_sec} is devoted to the solution of question Q1 above, whereas in Section \ref{open_sec} we provide an affirmative answer to question Q2 for bundles of matrix pencils under strict equivalence, as well as for matrices under similarity (in Section \ref{matrixbun_sec}), and matrix polynomials of higher degree (in Section \ref{polybun_sec}). Finally, Section \ref{conclusion_sec} presents a summary of the main contributions of the paper, together with some lines of further related research.

\section{Basic definitions and notation}\label{notation_sec}

We use the following notation throughout the paper. By $\CC^{m\times n}$ we denote the set of $m\times n$ matrices with complex entries, whereas $\gln$ denotes the set of $n\times n$ invertible matrices with complex entries. Also, $\overline \CC=\CC\cup\{\infty\}$. By $I_k$ we denote the $k\times k$ identity matrix, and $\nu(A)$ denotes the dimension of the (right) null space of the matrix $A$. Instead of using the representation of matrix pencils as pairs of matrices of the same size, we will represent a matrix pencil as $L(\la)=\la B+A$, with $A,B\in\CC^{m\times n}$, namely as a matrix polynomial of degree $1$ in the variable $\la$.  We often denote matrix pencils with a single capital letter (usually $L$ and $M$) and, for the sake of simplicity, in the notation for notions associated with a matrix pencil we omit the variable $\la$, and write just $L$ instead of $L(\la)$.

The {\em rank} of the pencil $L(\la)$ (that is sometimes found in the literature under the name ``normal rank"), denoted by $\rank L$, is the rank of $L(\la)$ considered as a matrix over the field of rational functions in the variable $\la$. In other words, it is the size of the largest non-identically zero minor of $L(\la)$.

Block-partitioned matrices (or matrix pencils) will appear frequently throughout the manuscript, and the blocks indicated with $*$ are not relevant in the arguments, developments, or results. For a block diagonal pencil (or matrix) with diagonal blocks $A_1,\hdots,A_k$ we use either the notation $\diag(A_1,\hdots,A_k)$ or $\bigoplus_{i=1}^kA_i$.

\subsection{The KFC, orbits and bundles}

Let us recall that the KCF of a matrix pencil $L(\la)$ (that we denote by KCF($L$)) is a block diagonal pencil, whose diagonal (``canonical") blocks can be of the following four forms (see, for instance, \cite[Ch. XII, \S5]{gant}):
\begin{itemize}
    \item Jordan blocks associated with a finite eigenvalue $\mu$, namely $\la I_k+J_k(\mu)$, for $k\geq1$, where 
    $$
    J_k(\mu):=\begin{bmatrix}
    -\mu&1\\&\ddots&\ddots\\&&-\mu&1\\&&&-\mu
    \end{bmatrix}_{k\times k}.
    $$
    \item Jordan blocks associated with the infinite eigenvalue, namely $\la N_k+I_k$, for $k\geq1$, where:
    $$
    N_k:=\begin{bmatrix}
    0&1\\&\ddots&\ddots&\\&&0&1\\&&&0
    \end{bmatrix}_{k\times k}.
    $$
    \item Right singular blocks, for $k\geq0$:
    $$
    R_k(\la)=:\begin{bmatrix}
    \la&1\\&\la&1\\&&\ddots&\ddots\\&&&\la&1
    \end{bmatrix}_{k\times (k+1)}.
    $$
\item Left singular blocks, $R_k(\la)^\top$, for $k\geq0$.
\end{itemize}
The KCF of $L(\la)$ is determined up to permutation of the diagonal blocks. 

Let us note that $R_0(\la)$ is a null column, whereas $R_0(\la)^\top$ is a null row. The KCF reveals all the invariants of a matrix pencil under strict equivalence, namely the set of distinct finite and infinite eigenvalues together with the number and the sizes of their associated Jordan blocks, and the number and the sizes of the right and left singular blocks. 

Then, $\mu\in\CC$ is a {\em finite eigenvalue} of $L(\la)$ if KCF($L$) contains, at least, one Jordan block associated with $\mu$, and $L(\la)$ has the {\em infinite eigenvalue} if KCF($L$) contains, at least, one Jordan block associated with the infinite eigenvalue. The pencil $L(\la)$ is said to be {\em regular} if there are neither right nor left singular blocks in KCF($L$) (this is equivalent to say that $L(\la)$ is square and $\det L(\la)$ is a non-identically zero polynomial). By $\Lambda(L)$ we denote the {\em spectrum} of the pencil $L(\la)$ (namely, the set of distinct eigenvalues of $L(\la)$, both finite and infinite).

We denote by $W(\mu,L)=(W_1(\mu,L),W_2(\mu,L),\hdots)$ the {\em Weyr characteristic} of the eigenvalue $\mu$ in the $m\times n$ matrix pencil $L(\la)$. In other words, when $\mu\in\CC$ (respectively, $\mu=\infty$), $W_i(\mu,L)$, for $i\geq1$, is the number of Jordan blocks $\la I_k+J_k(\mu)$ (resp., $\la N_k+I_k$), with $k\geq i$, in KCF($L$), (see, for instance, \cite{dehoyos}). If $\mu$ is not an eigenvalue of $L(\la)$, then $W(\mu,L)=(0,0,\hdots)$. Also, $r(L)=(r_0(L),r_1(L),\hdots)$ and $\ell(L)=(\ell_0(L),\ell_1(L),\hdots)$ denote, respectively, the Weyr characteristic of the right and left singular structure. In other words, $r_i(L)$ (respectively, $\ell_i(L)$) is the number of right (resp., left) singular blocks $R_k(\la)$ of size $k\times (k+1)$ (resp., $R_k(\la)^\top$ of size $(k+1)\times k$), with $k\geq i$, in KCF($L$). In particular, $r_0(L)=n-\rank L$ and $\ell_0(L)=m-\rank L$. Note that $W(\mu,L),r(L),$ and $\ell(L)$ are lists of non-increasing integers. We use the notation ${\cal L}_1\prec{\cal L}_2$ to denote the {\em majorization} of two lists of non-increasing integers, namely: $\sum _{i=1}^j{\cal L}_1(i)\leq \sum_{i=1}^j{\cal L}_2(i)$, for all $j\geq1$, assuming that the lists start with $i=1$.

For $\mu\in\CC$ and $\varepsilon>0$, the $\varepsilon$-neighborhood of $\mu$ is defined as $B(\mu,\varepsilon):=\{z\in\overline\CC:\ |z-\mu|<\varepsilon\}$, whereas if $\mu=\infty$, we set $B(\infty,\varepsilon):=\{z\in\CC:\ |z|>\varepsilon^{-1}\}$).

Following the notation in \cite{pervouchine}, $\Phi$ denotes the set of all one-to-one mappings of $\overline \CC$ to itself. We also use the notation $\Psi$ for the set of all mappings from $\overline\CC$ to itself (not necessarily one-to-one). Then, if $K_L(\la):={\rm KCF}(L)$ (where the canonical blocks are given in any order), and $\psi\in\Psi$, we denote by $\psi(L)$ any pencil which is strictly equivalent to the pencil obtained from $K_L(\la)$ after replacing the Jordan blocks associated with the eigenvalue $\mu\in\overline\CC$ by Jordan blocks of the same size associated with the eigenvalue $\psi(\mu)$, for any eigenvalue $\mu$ of $K_L(\la)$. Note that this pencil $\psi(L)$ is not uniquely determined, but all pencils $\psi(L)$ are strictly equivalent to each other.

For a given matrix pencil $L(\la)=\la B+A$, with $A,B\in\CC^{m\times n}$, we set:
$$
\begin{array}{ccll}
     \orb(L)&:=&\{\la PBQ+PAQ\,:\ P\in\mbox{\rm GL}_m(\CC),Q\in\mbox{GL}_n(\CC)\} &\mbox{(orbit of $L(\la)$),}\\
      \bun(L)&:=&\displaystyle\bigcup_{\varphi\in\Phi}\orb(\varphi(L)) &\mbox{(bundle of $L(\la)$).}
\end{array}
$$
Note that $\orb(\varphi(L))$ is well defined, regardless of the particular pencil $\varphi(L)$, since, as mentioned above, all pencils $\varphi(L)$ are strictly equivalent.

By definition, bundles are the union of orbits of all matrix pencils having the same KCF up to the eigenvalues. Note that this union is infinite provided that the pencils have, at least, one eigenvalue. However, if KCF($L$) contains only blocks of the form $R_k(\la)$ and/or $R_k(\la)^\top$, then $\bun(L)=\orb(L)$. Some special attention should be paid to the infinite eigenvalue. More precisely, the reason for considering maps over $\overline\CC$ is, precisely, to include the infinite eigenvalue in the bundles. For instance, if $L(\la)$ is of the form
$$
L(\la)=\la\left[\begin{array}{cc|c}1&0&0\\0&1&0\\\hline0&0&1\end{array}\right]+\left[\begin{array}{cc|c}-\mu&1&0\\0&-\mu&0\\\hline0&0&-\widetilde\mu\end{array}\right],\qquad \mu\neq\widetilde\mu
$$
(namely, the pencil has two different finite eigenvalues $\mu$ and $\widetilde\mu$ with Jordan blocks of sizes $2$ and $1$, respectively) then
$$
\begin{array}{ccl}
{\cal B}(L)&=&\footnotesize\left\{\la PQ+P\left[\begin{array}{cc|c}-a&1&\\0&-a&\\\hline&&-\widetilde a\end{array}\right]Q:a,\widetilde a\in\CC,a\neq\widetilde a,\ P,Q\in\mbox{GL}_3(\CC)\right\}\\
&&\footnotesize\bigcup\left\{\la P\left[\begin{array}{cc|c}1&0&\\0&1&\\\hline&&0\end{array}\right]Q+P\left[\begin{array}{cc|c}-a&1&\\0&-a&\\\hline&&1\end{array}\right]Q: a\in\CC,\ P,Q\in\mbox{GL}_3(\CC)\right\}\\
&&\footnotesize\bigcup \left\{\la P\left[\begin{array}{cc|c}0&1&\\0&0&\\\hline&&1\end{array}\right]Q+P\left[\begin{array}{cc|c}1&0&\\0&1&\\\hline&&-a\end{array}\right]Q: a\in\CC,\ P,Q\in\mbox{GL}_3(\CC)\right\}.
\end{array}
$$
The first set in the union above corresponds to a direct sum of two Jordan blocks associated with a couple of finite (different) eigenvalues, the second one corresponds to a Jordan block of size $2$ associated with a finite eigenvalue, together with a Jordan block of size $1$ associated with the infinite eigenvalue, and the third set corresponds to a Jordan block of size $2$ associated with the infinite eigenvalue, together with a Jordan block of size $1$ associated with a finite eigenvalue.

We will use the standard notation $\overline S$ for the closure, in the standard topology, of the set $S$. In this context, the set of matrix pencils $\la B+A$, with $A,B\in\CC^{m\times n}$, is identified with $\CC^{2mn}$, and we consider the standard topology in this set. For the closure of the orbit and the bundle of a matrix pencil $L(\la)$ we use the notation $\overline{\orb}(L)$ and $\overline{\bun}(L)$, respectively.

\subsection{Coalescence of eigenvalues}\label{coalescence_sec}
The notion of coalescence of eigenvalues, which is key to describe the inclusion relationships for closures of bundles of matrices and matrix pencils, has been used in previous references, including \cite{eek1,eek2}. We state here a formal definition, which is equivalent to the one mentioned in \cite[Th. 3.3-(5)]{eek2}. We recall that the union of two lists of non-increasing integers (like the Weyr characteristics), say ${\cal L}_1$ and ${\cal L}_2$, that we denote by ${\cal L}_1\cup{\cal L}_2$, consists of a new list of non-increasing integers which is obtained by arranging all numbers in ${\cal L}_1$ and ${\cal L}_2$ in a non-increasing order. For instance, if 
$
{\cal L}_1=(5,2)$ and ${\cal L}_2=(6,3,3,2,1),
$
then ${\cal L}_1\cup{\cal L}_2=(6,5,3,3,2,2,1)$.

To understand the following definition, we recall that $\Psi$ denotes the set of mappings from $\overline\CC$ to itself.

\begin{definition}\label{coalescence_def} {\rm (Coalescence of eigenvalues).}
Let $L(\la)$ be a matrix pencil with distinct eigenvalues $\mu_1,\hdots,\mu_s$, and let $\psi\in\Psi$. Then, $\psi_c(L)$ is any matrix pencil of the same size as $L(\la)$ satisfying the following three properties:
\begin{itemize}
    \item $r(\psi_c(L))=r(L)$,
    \item $\ell(\psi_c(L))=\ell(L)$, and
    \item $W(\mu,\psi_c(L))=\bigcup_{\mu_i\in\psi^{-1}(\mu)}W(\mu_i,L)$, for all $\mu\in\overline{\CC}$.
\end{itemize}
We say that the eigenvalues $\mu_{i_1},\hdots,\mu_{i_d}$ of $L(\la)$ have {\em coalesced} to the eigenvalue $\mu$ in $\psi_c(L)$ if $\psi^{-1}(\mu)=\{\mu_{i_1},\hdots,\mu_{i_d}\}\cup S$, with $S\cap\Lambda(L)=\emptyset$.
\end{definition}

\begin{remark}
The matrix pencil $\psi_c(L)$ in Definition \ref{coalescence_def} is not uniquely defined, but all pencils $\psi_c(L)$, for some given $L(\la)$ and $\psi$, are strictly equivalent to each other, since they all have the same KCF  (as it happens with $\psi(L)$). Moreover, note that $\bun(\psi_c(L))=\bun(\widetilde \psi_c(L))$, for any $\psi,\widetilde\psi\in\Psi$ such that $\psi(\mu)=\psi(\widetilde\mu)$ if and only if $\widetilde\psi(\mu)=\widetilde\psi(\widetilde\mu)$, for any $\mu\neq\widetilde\mu$.
\end{remark}

Coalescence of eigenvalues is a way of ``gathering" eigenvalues by taking the union of their Weyr characteristics. The following example aims to illustrate this notion.

\begin{example}
Let $L(\la)$ be the following pencil, already given in KCF:
$$
\begin{array}{rl}
L(\la)=\diag&(R_3(\la),R_1(\la),\la I_2+J_2(0),\la I_2+J_2(0),\la I_1+J_1(0),\\
&\la I_3+J_3(1),\la I_2+J_2(1),\la I_4+J_4(2),R_2(\la)^\top),
\end{array}
$$
so that $r(L)=(2,2,1,1),$ $\ell(L)=(1,1,1),$ and $W(0,L)=(3,2),$ $W(1,L)=(2,2,1),$ $W(2,L)=(1,1,1,1).$

Let $\psi:\overline{\CC}\rightarrow\overline\CC$ be such that $\psi(0)=\psi(1)=\psi(2)=1$. Then any pencil $\psi_c(L)$ is of the form:
$$
\psi_c(L)=P\cdot\diag(R_3(\la),R_1(\la),\la I_9+J_9(1),\la I_4+J_4(1),\la I_1+J_1(1),R_2(\la)^\top)\cdot Q,
$$
for some invertible matrices $P,Q$. Note, indeed, that, for any $\psi_c(L)$ as above, $r(\psi_c(L))=(2,2,1,1)=r(L)$, $\ell(\psi_c(L))=(1,1,1)=\ell(L)$, and $W(1,\psi_c(L))=(3,2,2,2,1,1,1,1,1)=W(0,L)\cup W(1,L)\cup W(2,L)$.

However, if $\psi$ is such that $\psi(0)=\psi(2)=1,\psi(1)=5$, then any pencil $\psi_c(L)$ is now of the form
$$
\begin{array}{cl}
\psi_c(L)=&P\cdot\diag(R_3(\la),R_1(\la),\la I_6+J_6(1),\la I_2+J_2(1),\la I_1+J_1(1),\\&\la I_3+J_3(5),\la I_2+J_2(5),R_2(\la)^\top)\cdot Q,
\end{array}
$$
for some invertible matrices $P,Q$. Note, indeed, that, for any $\psi_c(L)$ as above, $r(\psi_c(L))=(2,2,1,1)=r(L)$, $\ell(\psi_c(L))=(1,1,1)=\ell(L)$, and 
$W(1,\psi_c(L))=(3,2,1,1,1,1)=W(0,L)\cup W(2,L)$, $W(5,\psi_c(L))=(2,2,1)=W(1,L)$.
\end{example}

\subsection{Some basic results}\label{basicresults_sec}

We are interested in the majorization of the Weyr characteristics of an eigenvalue in two given matrix pencils. This majorization is defined by inequalities on the sum of the first elements of the corresponding Weyr characteristic, so a formula for this sum would be quite useful to this end. Such a formula will come from the rank of certain big block-partitioned matrices. For describing this, we follow  some of the developments in the PhD. thesis \cite{tesis-inma}, that we include here for the sake of completeness.

Given the matrix pencil $L(\la)=\la B+A$, with $A,B\in\CC^{m\times n}$ and $\mu\in\CC$, we define the following block-partitioned matrices with $d$ block columns and $d$ block rows:
\begin{equation}\label{pmu}
P_\mu^d(L):=\begin{bmatrix}
L(\mu)&0&\hdots&\hdots&0\\
B&L(\mu)&\ddots&&\vdots&\\
0&B&\ddots&\ddots&\vdots&\\
\vdots&\ddots&\ddots&L(\mu)&0\\
0&\hdots&0&B&L(\mu)
\end{bmatrix}_{dm\times dn},\qquad\mbox{for $d\geq1$}.
\end{equation}
Note that 
$$
P_\mu^d(L)=\begin{bmatrix}0&0\\I_{d-1}&0\end{bmatrix}\otimes B+I_d\otimes L(\mu),
$$
where $\otimes$ denotes the Kronecker product.

For $d_1,\hdots,d_s\geq1$ and $\la_1,\hdots,\la_s\in\CC$ (different to each other), we define the following block-partitioned matrices:
$$
P_{\la_1,\hdots,\la_s}^{d_1,\hdots,d_s}(L):=\begin{bmatrix}
P_{\la_1}^{d_1}(L)&0&0&\hdots&0\\
Q_{d_2,d_1}(B)&P_{\la_2}^{d_2}(L)&0&\hdots&0\\
\vdots&\ddots&\ddots&&\vdots\\
0&0&\hdots&Q_{d_s,d_{s-1}}(B)&P_{\la_s}^{d_s}(L)
\end{bmatrix},
$$
where, for $2\leq i\leq s$, 
$$
Q_{d_i,d_{i-1}}(B):=\begin{bmatrix}
0&\hdots&0&B\\
0&\hdots&0&0\\
\vdots&\ddots&\vdots&\vdots&\\
0&\hdots&0&0
\end{bmatrix}\in\CC^{d_i m\times d_{i-1}n}.
$$

\begin{lemma}\label{equivalence_lemma}
Let $L(\la)=\la B+A$. Then, for any distinct $\la_1,\hdots,\la_s\in\CC$ and $d_1,\hdots,d_s\geq1$,
$$
\nu(\diag(P_{\la_{1}}^{d_1}(L),\hdots,P_{\la_{s}}^{d_s}(L)))=\nu(P_{\la_{1},\hdots,\la_{s}}^{d_1,\hdots,d_s}(L)),
$$
where $\nu(Z)$ denotes the dimension of the right nullspace of the matrix $Z$.
\end{lemma}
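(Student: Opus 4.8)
The plan is to show that the two matrices $\diag(P_{\la_1}^{d_1}(L),\hdots,P_{\la_s}^{d_s}(L))$ and $P_{\la_1,\hdots,\la_s}^{d_1,\hdots,d_s}(L)$ have nullspaces of the same dimension by exhibiting an explicit (left and right) equivalence transformation between them, realized by invertible matrices. The block-lower-triangular matrix $P_{\la_1,\hdots,\la_s}^{d_1,\hdots,d_s}(L)$ differs from the block-diagonal one only in the subdiagonal coupling blocks $Q_{d_i,d_{i-1}}(B)$, so intuitively these couplings should be removable by block row/column operations exploiting that $\la_i\neq\la_j$, which makes the diagonal blocks $P_{\la_i}^{d_i}(L)$ "invertible relative to one another" in the appropriate sense. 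Since left/right multiplication by invertible matrices preserves rank, and hence nullity, this will give the claim.

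First I would set up notation for a block $2\times 2$ version: it suffices to treat $s=2$ and then iterate, because the general case follows by peeling off one block row/column at a time (an induction on $s$). So the core statement reduces to showing
$$
\nu\!\left(\begin{bmatrix}P_{\la_1}^{d_1}(L)&0\\ Q_{d_2,d_1}(B)&P_{\la_2}^{d_2}(L)\end{bmatrix}\right)=\nu\!\left(\begin{bmatrix}P_{\la_1}^{d_1}(L)&0\\ 0&P_{\la_2}^{d_2}(L)\end{bmatrix}\right).
$$
To kill the off-diagonal block $Q_{d_2,d_1}(B)$, I would look for a matrix $X$ of the appropriate size such that $Q_{d_2,d_1}(B) = X\,P_{\la_1}^{d_1}(L) - P_{\la_2}^{d_2}(L)\,Y$ for suitable $X,Y$, since then the row operation subtracting $X$ times the first block row from the second (or the analogous column operation) removes the coupling. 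Here one should use the structure: $Q_{d_2,d_1}(B)$ has $B$ only in its top-right corner. A cleaner route is to find a single matrix $Z$ with $P_{\la_2}^{d_2}(L)\,Z - Z\,P_{\la_1}^{d_1}(L)$ equal to $-Q_{d_2,d_1}(B)$; this is a Sylvester-type equation, and it is solvable because $P_{\la_1}^{d_1}(L)$ and $P_{\la_2}^{d_2}(L)$ have, in the relevant sense, "disjoint spectra" as $\la_1\neq\la_2$ — more precisely, one can solve it entry-block by entry-block using the explicit Kronecker-product form $P_\mu^d(L)=\bigl(\begin{smallmatrix}0&0\\ I_{d-1}&0\end{smallmatrix}\bigr)\otimes B+I_d\otimes L(\mu)$ given in the excerpt, reducing the equation to a triangular system in the block rows whose diagonal coefficients are $L(\la_2)$ and $L(\la_1)$ acting on the difference, which is nonsingular on the appropriate quotient since $(\la_2-\la_1)B$ is invertible wherever it needs to be inverted. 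Actually, the simplest concrete choice: I would verify directly that performing the block column operation that adds a suitable multiple (involving only powers of $B$ and the resolvent-like combinations of $L(\la_1),L(\la_2)$) of the first block column to the second annihilates $Q_{d_2,d_1}(B)$ while leaving the diagonal blocks unchanged up to invertible factors — this is a finite bookkeeping computation.

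The main obstacle will be handling the fact that $L(\mu)$ need not be square or invertible (the pencil $L(\la)=\la B+A$ is $m\times n$ and generally singular), so one cannot literally invert $L(\la_i)$ or $P_{\la_i}^{d_i}(L)$, and the Sylvester-equation argument must be carried out carefully over the full rectangular matrices rather than by a naive spectral-separation argument. I would get around this by working with the augmented square-ification or, better, by performing the elimination purely formally: the operation that clears $Q_{d_2,d_1}(B)$ can be written as right-multiplication of $\diag(P_{\la_1}^{d_1}(L),P_{\la_2}^{d_2}(L))$ by a block matrix of the form $\bigl(\begin{smallmatrix}I&W\\ 0&I\end{smallmatrix}\bigr)$ for a suitable $W$ built from $B$ and from the combinatorial structure of the $P$-matrices, and left-multiplication by $\bigl(\begin{smallmatrix}I&0\\ V&I\end{smallmatrix}\bigr)$; both are unipotent hence invertible, so rank is preserved and the nullity identity follows. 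Once $s=2$ is done, the induction step for general $s$ is immediate since the new coupling blocks $Q_{d_i,d_{i-1}}(B)$ involve only consecutive indices, so clearing them one at a time does not reintroduce earlier ones. I would close by noting the statement for $\nu$ is equivalent to the corresponding statement for rank, which is what the explicit equivalence directly gives.
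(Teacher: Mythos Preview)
Your high-level strategy---reduce to $s=2$, clear the off-diagonal coupling by elementary block row/column operations, then induct---is exactly the paper's approach. However, the execution has a genuine gap: you never identify the one concrete identity that makes the elimination possible, and the alternatives you float do not work.

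The Sylvester-equation route $P_{\la_2}^{d_2}(L)\,Z - Z\,P_{\la_1}^{d_1}(L) = -Q_{d_2,d_1}(B)$ is dimensionally inconsistent when $m\neq n$: the left-hand side forces $Z$ to be $d_2 n\times d_1 m$, while the right-hand side is $d_2 m\times d_1 n$. Your remark that ``$(\la_2-\la_1)B$ is invertible wherever it needs to be inverted'' is not something you can use, since $B\in\CC^{m\times n}$ is an arbitrary (possibly rectangular, possibly rank-deficient) matrix. And saying the elimination is ``a finite bookkeeping computation'' with ``a suitable $W$ built from $B$'' is exactly the step that needs content.

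The missing ingredient is simply that, since $L(\la)=\la B+A$, one has
\[
B=\frac{1}{\la_1-\la_2}\bigl(L(\la_1)-L(\la_2)\bigr),
\]
which holds for \emph{any} $m\times n$ pencil as soon as $\la_1\neq\la_2$, with no invertibility assumption on $B$ or $L(\mu)$. This lets you cancel the $B$ sitting in block position $(d_1+1,d_1)$ by adding $\frac{1}{\la_1-\la_2}$ times block column $d_1+1$ to block column $d_1$ (using the $L(\la_2)$ on the diagonal below) and $-\frac{1}{\la_1-\la_2}$ times block row $d_1$ to block row $d_1+1$ (using the $L(\la_1)$ on the diagonal above). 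These operations push scalar multiples of $B$ one block-antidiagonal lower; iterating, you sweep all residual $B$-blocks down to the $(d_1+d_2,1)$ corner, where one last pair of operations kills it. All of this is over rectangular blocks and uses only that $\la_1\neq\la_2$. Once you supply this identity and the sweep, your outline becomes a complete proof identical in spirit to the paper's.
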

\begin{proof}
The result is a consequence of the fact that  $P_{\la_{1},\hdots,\la_{s}}^{d_1,\hdots,d_s}(L)$ is equivalent, by elementary block-row and block-column operations, to the block diagonal matrix $\diag(P_{\la_{1}}^{d_1}(L),\hdots,P_{\la_{s}}^{d_s}(L))$ (because these operations preserve the rank and, as a consequence, the dimension of the null space, of the matrix). To see this equivalence, let us first consider just two big blocks, namely $s=2$, so the matrix reads $P_{\alpha,\beta}^{d_1,d_2}(L)$, with $\alpha,\beta\in\CC$ and $\alpha\neq\beta$. Note that, for any $s\in\CC$, 
\begin{equation}\label{sb} 
sB=s\left(\frac{1}{\alpha-\beta}L(\alpha)-\frac{1}{\alpha-\beta}L(\beta)\right).
\end{equation}
Therefore, adding to the $d_1$th block-column of $P_{\alpha,\beta}^{d_1,d_2}(L)$ the $(d_1+1)$st block column multiplied by $1/(\alpha-\beta)$, and to the $(d_1+1)$st block-row the $d_1$th one multiplied by $-1/(\alpha-\beta)$, the resulting matrix has a $0$ block in the $(d_1+1,d_1)$ block position, instead of the block $B$ that appears in $P_{\alpha,\beta}^{d_1,d_2}(L)$, but the blocks in the positions $(d_1+1,d_1-1)$ and $(d_1+2,d_1)$ are of the form $s_1B$ and $s_2B$, for some $s_1,s_2\in\CC$. These blocks are in a block diagonal that is below the one containing $B$ in the original matrix $P_{\alpha,\beta}^{d_1,d_2}(L)$, so the previous block row and block column operations have taken the block $B$ to some multiples of $B$ in the lower block diagonal. Because of \eqref{sb}, we can find appropriate elementary block row and block column operations that turn these blocks into $0$, but creates some nonzero blocks, which are, again, multiples of $B$, in the next lower block diagonal. Proceeding recursively in this way, we end up with a matrix which is equal to $\diag(P_\alpha^{d_1}(L),P_\beta^{d_2}(L)$ except for a nonzero block of the form $sB$, for some $s\in\CC$, in the $(d_1+d_2,1)$ block-position. Using again \eqref{sb}, an appropriate linear combination of the first block row and the last (i. e., the $(d_1+d_2)$th one) block column will shrink this block to $0$, so we end up with $\diag(P_\alpha^{d_1}(L),P_\beta^{d_2}(L))$, as wanted.

 If there are more than two blocks, we partition the matrix $P_{\la_{1},\hdots,\la_{s}}^{d_1,\hdots,d_s}(L)$ into a $2\times2$ block matrix, as follows:
\begin{equation}\label{pl-partition}
P_{\la_{1},\hdots,\la_{s}}^{d_1,\hdots,d_s}(L)=\begin{bmatrix}P_{\la_{1},\hdots,\la_{s-1}}^{d_1,\hdots,d_{s-1}}(L)&0\\Q_{d_s,\widetilde d_{s-1}}(B)&P_{\la_s}^{d_s}(L)\end{bmatrix},
\end{equation}
with $\widetilde d_{s-1}:=d_1+d_2+\cdots+d_{s-1}$. Using appropriate elementary block-row and block-column operations as explained before for the case $s=2$, this matrix is equivalent to 
\begin{equation}\label{pl-0block}
\begin{bmatrix}P_{\la_{1},\hdots,\la_{s-1}}^{d_1,\hdots,d_{s-1}}(L)&0\\0&P_{\la_s}^{d_s}(L)\end{bmatrix}.
\end{equation}
To be more precise, in this case, instead of \eqref{sb}, we need to use the identity
$$
sB=s\left(\frac{1}{\la_i-\la_s} L(\la_i)-\frac{1}{\la_i-\la_s} L(\la_s)\right),
$$
for $i=1,\hdots,s-1$, and the block-row and block-column operations produce some multiples of $B$ in the $(2,1)$ block of \eqref{pl-partition}, that, following the same procedure as before, will move from one block diagonal to the ``lower" one. Then, after a finite number of elementary block-row and block-column operations we arrive at \eqref{pl-0block}. From \eqref{pl-0block} we can proceed in the same way with the upper left block $P_{\la_{1},\hdots,\la_{s-1}}^{d_1,\hdots,d_{s-1}}(L)$, and so on, until we get $\diag(P_{\la_{1}}^{d_1}(L),\hdots,P_{\la_{s}}^{d_s}(L))$.
\end{proof}

The following result provides a formula for the sum of the first $d$ terms in the Weyr characteristic of an eigenvalue of a matrix pencil, in terms of the dimension of the null space of the matrix in \eqref{pmu}.

\begin{lemma}\label{rank_lemma}
Let $L(\la)$ be an $m\times n$ matrix pencil and $\mu\in\CC$. Then, for all $d\geq1$:
$$
\nu(P_\mu^d(L))=\sum_{i=1}^dW_i(\mu,L)+d\,r_0(L),
$$
where $r_0(L)=n-\rank L$.
\end{lemma}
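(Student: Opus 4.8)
The plan is to reduce the identity, by invariance under strict equivalence and additivity over direct sums, to a case-by-case verification on the four types of Kronecker canonical blocks.

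First I would observe that $\nu(P_\mu^d(L))$ depends only on $\mathrm{KCF}(L)$. Indeed, if $\widetilde L(\la)=\la(PBQ)+PAQ$ with $P\in\glm$ and $Q\in\gln$, then combining the identity $P_\mu^d(L)=S_d\otimes B+I_d\otimes L(\mu)$ (with $S_d=\left[\begin{smallmatrix}0&0\\ I_{d-1}&0\end{smallmatrix}\right]$) recorded in the text with the mixed-product rule for Kronecker products gives $P_\mu^d(\widetilde L)=(I_d\otimes P)\,P_\mu^d(L)\,(I_d\otimes Q)$, and since $I_d\otimes P$ and $I_d\otimes Q$ are invertible the nullity is unchanged; the right-hand side of the claimed formula is built from $\mathrm{KCF}(L)$ as well. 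Hence I may assume $L(\la)=\mathrm{KCF}(L)$. Then both sides are additive under direct sums: if $L=L_1\oplus L_2$ then $W_i(\mu,L)=W_i(\mu,L_1)+W_i(\mu,L_2)$ and $r_0(L)=r_0(L_1)+r_0(L_2)$ (from $n-\rank L=(n_1+n_2)-(\rank L_1+\rank L_2)$), while a simultaneous permutation of the block rows and block columns of $P_\mu^d(L_1\oplus L_2)$ turns it into $P_\mu^d(L_1)\oplus P_\mu^d(L_2)$, so $\nu(P_\mu^d(L))=\nu(P_\mu^d(L_1))+\nu(P_\mu^d(L_2))$. It therefore suffices to prove the formula when $L(\la)$ is a single canonical block.

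Next I would run through the four block types. If $L(\la)=\la N_k+I_k$ (block at $\infty$), or $L(\la)=\la I_k+J_k(\alpha)$ with $\alpha\neq\mu$, then $L(\mu)$ is invertible, so $P_\mu^d(L)$ is block lower triangular with invertible diagonal blocks and hence nonsingular, while on the right $\mu$ is not an eigenvalue and $r_0(L)=0$; both sides are $0$. If $L(\la)=\la I_k+J_k(\mu)$, then $L(\mu)=N_k$ and $B=I_k$, and writing $v=(v_1,\dots,v_d)$ the system $P_\mu^d(L)v=0$ reads $N_kv_1=0$ and $v_{i-1}=-N_kv_i$ for $i=2,\dots,d$; hence $v_j=(-N_k)^{d-j}v_d$ and the first equation forces $N_k^d v_d=0$, so $\nu(P_\mu^d(L))=\dim\ker N_k^d=\min(d,k)$, which equals $\sum_{i=1}^d W_i(\mu,L)$ since $W(\mu,L)$ is a list of $k$ ones, and $r_0(L)=0$. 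If $L(\la)=R_k(\la)$ with $k\ge1$, then $L(\mu)$ has full row rank $k$ for every $\mu$, and reading the equations of $P_\mu^d(L)^\top w=0$ from the bottom block upward forces $w=0$; thus $P_\mu^d(L)$ has full row rank $dk$ and $\nu(P_\mu^d(L))=d(k+1)-dk=d$, matching $0+d\,r_0(L)$ since $r_0(L)=1$; for $k=0$ the pencil $R_0(\la)$ is $0\times1$, $P_\mu^d(L)$ is $0\times d$, and both sides equal $d$. Finally, if $L(\la)=R_k(\la)^\top$ with $k\ge1$, then $L(\mu)$ has full column rank $k$ and reading $P_\mu^d(L)v=0$ from the top block downward forces $v=0$, so $\nu(P_\mu^d(L))=0=d\,r_0(L)$ as $r_0(L)=0$; and $R_0(\la)^\top$ is $1\times0$, giving $0$ on both sides.

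I do not expect a serious obstacle here: the only genuine computation is the kernel count for the Jordan block sitting at $\mu$ (everything boiling down to $\dim\ker N_k^d=\min(d,k)$), and the one point demanding a little care is verifying that the block permutation used in the direct-sum reduction is correct when the two summands have different numbers of rows and columns.
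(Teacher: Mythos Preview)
Your proof is correct and follows essentially the same approach as the paper: reduce to the KCF by strict-equivalence invariance, split over direct sums via a block permutation, and handle the canonical pieces separately. The only difference is granularity: the paper groups the blocks into $L_r$, $L_{\mathrm{reg}}$, $L_\ell$ and declares the three resulting nullity counts ``straightforward'' (citing \cite{glr} for the regular part), whereas you go one block at a time and actually carry out the computation $\nu(P_\mu^d(\la I_k+J_k(\mu)))=\dim\ker N_k^d=\min(d,k)$ and the full-row/full-column rank arguments for $R_k$ and $R_k^\top$.
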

\begin{proof}
It is straightforward to see that, if $\widetilde L(\la)$ is a strictly equivalent pencil to $L(\la)$, then $\nu(P_\mu^d(L))=\nu(P_\mu^d(\widetilde L))$. Therefore, we may assume that $L(\la)$ is given in KCF. Let us decompose $L(\la)=\diag(L_r(\la),L_{\rm reg}(\la),L_\ell(\la))$, where $L_{\rm reg}(\la)$ contains all Jordan blocks of $L(\la)$, $L_r(\la)$ contains all blocks of the form $R_k(\la)$, whereas $L_\ell(\la)$ contains the blocks of the form $R_k(\la)^\top$. By means of column and row permutations, $P_\mu^d(\diag(L_r,L_{\rm reg},L_\ell))$ is strictly equivalent to $\diag(P_\mu^d(L_r),P_\mu^d(L_{\rm reg})),P_\mu^d(L_\ell))$, so $\nu(P_\mu^d(L))=\nu(P_\mu^d(\diag(L_r,L_{\rm reg},L_\ell)))=\nu(\diag(P_\mu^d(L_r),P_\mu^d(L_{\rm reg}),P_\mu^d(L_\ell)))=\nu(P_\mu^d(L_r))+\nu(P_\mu^d(L_{\rm reg}))+\nu(P_\mu^d(L_\ell))$.

Now, since $P_\mu^d(L_\ell)$ has full column rank, for every $\mu\in\CC$, then $\nu(P_\mu^d(L_\ell))=0$. Also, since $r_0(L)$ is the number of blocks $R_k(\la)$ in $L_r(\la)$, it is also straightforward to see that $\nu(P_\mu^d(L_r))=d\,r_0(L)$. Finally, $\nu(P_\mu^d(L_{\rm reg}))=\sum_{i=1}^d W_i(\mu,L_{\rm reg})=\sum_{i=1}^d W_i(\mu,L)$, where the first identity is also straightforward to get (see also \cite[p. 36]{glr}).
\end{proof}

In the case where $r_0=0$, Lemma \ref{rank_lemma} is a consequence of the developments carried out in \cite[\S 5]{kk86}.

The Weyr characteristic of the eigenvalue $\mu$ in the pencil $L(\la)$ is the {\em conjugate} partition of the {\em Segre characteristic}, denoted by $S(\mu,L)=(S_1(\mu,L),S_2(\mu,L),\hdots)$, where $S_i(\mu,L)$ is the size of the $i$th largest Jordan block associated with the eigenvalue $\mu$ in KCF($L$) (see, for instance, \cite{dehoyos}). In general, the conjugate of a partition, ${\bf a}$, is the partition, denoted by ${\bf a}^\sharp$, whose $i$th element, for $i\geq1$, is equal to the number of elements in ${\bf a}$ which are greater than or equal to $i$. Also, the sum of a finite number of partitions is the partition whose $i$th element is the sum of the $i$th elements in all partitions (adding zeroes at the end of the partitions if necessary). The following result can be found in \cite[p. 6]{macdonald}. 

\begin{lemma}\label{dual_lem}
Let ${\bf a}_1,\hdots,{\bf a}_k$ be partitions. Then
$$
\left(\bigcup_{i=1}^k{\bf a}_i\right)^\sharp=\sum_{i=1}^k{\bf a}_i^\sharp,\qquad\mbox{and}\qquad
\left(\sum_{i=1}^k{\bf a}_i\right)^\sharp=\bigcup_{i=1}^k{\bf a}_i^\sharp.
$$
\end{lemma}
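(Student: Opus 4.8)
The statement is the classical duality between union and sum of partitions under conjugation. I would first observe that the two identities in the lemma are equivalent to each other: applying $\sharp$ to both sides of the first identity and using the involution property $({\bf a}^\sharp)^\sharp = {\bf a}$ (valid for any partition) immediately yields the second, with $\sum {\bf a}_i^\sharp$ playing the role of $\bigcup {\bf a}_i$ after a change of names. So it suffices to prove one of them, say $\bigl(\bigcup_{i=1}^k {\bf a}_i\bigr)^\sharp = \sum_{i=1}^k {\bf a}_i^\sharp$. By an obvious induction on $k$ it further suffices to treat $k=2$, i.e. to show $({\bf a}\cup{\bf b})^\sharp = {\bf a}^\sharp + {\bf b}^\sharp$.

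For the case $k=2$ I would argue directly from the definition of the conjugate partition. Fix $i\ge 1$. By definition, the $i$th entry of $({\bf a}\cup{\bf b})^\sharp$ is the number of parts of ${\bf a}\cup{\bf b}$ that are $\ge i$. Since ${\bf a}\cup{\bf b}$ is just the multiset union of the parts of ${\bf a}$ and the parts of ${\bf b}$ (rearranged in non-increasing order, which does not change how many parts are $\ge i$), this count splits as (number of parts of ${\bf a}$ that are $\ge i$) plus (number of parts of ${\bf b}$ that are $\ge i$), which is exactly ${\bf a}^\sharp_i + {\bf b}^\sharp_i$, the $i$th entry of ${\bf a}^\sharp + {\bf b}^\sharp$. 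Since this holds for every $i$, the two partitions coincide. A clean way to package this is via the indicator/"staircase" description: for any partition ${\bf c}$ one has ${\bf c}^\sharp_i = \#\{j : {\bf c}_j \ge i\} = \sum_j \mathbb{1}[{\bf c}_j \ge i]$, and additivity of this count over the disjoint index sets contributed by ${\bf a}$ and ${\bf b}$ is the whole content.

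I do not expect any real obstacle here; the only point that needs a word of care is the involution property $({\bf a}^\sharp)^\sharp = {\bf a}$ used to deduce the second identity from the first (this is the standard fact that conjugating a Young diagram twice returns the original, i.e. reflecting across the main diagonal is an involution), and the bookkeeping of trailing zeros when the partitions have different lengths — but both are routine and handled automatically if one works with the convention that a partition is an eventually-zero non-increasing sequence. For completeness I might simply cite \cite[p. 6]{macdonald} as the excerpt already does, and include the short indicator-function computation above as the self-contained argument.
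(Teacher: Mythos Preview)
Your proposal is correct. The paper does not actually prove this lemma at all; it simply states the result and cites \cite[p.~6]{macdonald}, so your indicator-function argument (plus the reduction to $k=2$ via induction and the use of the involution $(\,\cdot\,)^{\sharp\sharp}=\mathrm{id}$ to get the second identity from the first) goes beyond what the paper provides and is entirely adequate.
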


The characterization for the inclusion between orbit closures of matrix pencils, which was obtained independently in \cite{bongartz} and \cite{pokrzywa}, and reformulated in \cite{dehoyos}, will be used several times in the paper. We include it here for completeness (the statement we present is in between the ones of \cite[Lemma 1.3]{dehoyos} and \cite[Th. 3.1]{eek2}).

\begin{theorem}\label{orbitinclusion_th}{\rm (Characterization of the inclusion between orbit closures).} The matrix pencil $P_2(\la)$ belongs to the closure of the orbit of the pencil $P_1(\la)$ (in other words, $\overline\orb(P_2)\subseteq\overline\orb(P_1) $) if and only if the following relations hold:
\begin{itemize}
    \item[{\rm (i)}] $r(P_2)\prec r(P_1)+(h,h,\hdots),$
    \item[{\rm (ii)}] $\ell(P_2)\prec \ell(P_1)+(h,h,\hdots),$
    \item[{\rm (iii)}] $W(\mu,P_1)\prec W(\mu,P_2)+(h,h,\hdots),$ for all $\mu\in\overline\CC$,
\end{itemize}
where $h:=\rank P_1-\rank P_2$.
\end{theorem}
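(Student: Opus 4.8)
The plan is to establish the two implications separately: the forward one, that $\overline\orb(P_2)\subseteq\overline\orb(P_1)$ forces (i)--(iii), is a soft consequence of the semicontinuity of matrix ranks, while the converse --- realizing any pencil meeting (i)--(iii) as a limit of pencils strictly equivalent to $P_1$ --- is the substantial part, for which I would follow the scheme of \cite{bongartz,pokrzywa} (see also \cite{dehoyos}).

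\emph{Necessity.} Assume $\overline\orb(P_2)\subseteq\overline\orb(P_1)$ and fix a sequence of pencils in $\orb(P_1)$ converging to $P_2$. Since the rank of a matrix is a lower semicontinuous function of its entries, $\nu(X)$ cannot decrease along a convergent sequence of matrices of a fixed size; hence $\nu(\Theta(P_2))\ge\nu(\Theta(P_1))$ for every map $\Theta$ that is linear in the entries of the pencil, sends $m\times n$ pencils to matrices of one fixed size, and satisfies that $\nu(\Theta(L))$ depends only on the strict equivalence class of $L$. Three such maps do the job. Taking $\Theta=P^d_\mu(\cdot)$ from \eqref{pmu}, Lemma \ref{rank_lemma} gives $\nu(P^d_\mu(L))=\sum_{i=1}^dW_i(\mu,L)+d\,r_0(L)$ with $r_0(L)=n-\rank L$; choosing first $\mu\notin\Lambda(P_1)\cup\Lambda(P_2)$ yields $h:=\rank P_1-\rank P_2\ge0$, and then substituting $r_0(P_2)=r_0(P_1)+h$ in the inequality $\nu(P^d_\mu(P_2))\ge\nu(P^d_\mu(P_1))$ for arbitrary $\mu$ and all $d$ gives exactly (iii) for finite $\mu$. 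Applying this to the reversed pencils $\rev P_1,\rev P_2$ --- a pencil-linear bijection commuting with strict equivalence, interchanging the Jordan structures at $0$ and $\infty$ and preserving $r$, $\ell$ and the rank --- produces (iii) at $\mu=\infty$. Next, with $\Theta=M_d(\cdot)$, the $(d+1)m\times dn$ matrix having $A$ in each diagonal block, $B$ in each subdiagonal block and zeros elsewhere ($d$ block columns), one identifies $\ker M_d(L)$ with the polynomial vectors of degree $<d$ in the $\CC(\la)$-null space of $L(\la)$, which gives $\nu(M_d(L))=d\,r_0(L)-\sum_{i=1}^dr_i(L)$; the semicontinuity inequality together with $r_0(P_2)=r_0(P_1)+h$ then yields (i). Finally, running the $M_d$-argument for $L^\top$ --- again a pencil-linear bijection commuting with strict equivalence, swapping $r$ and $\ell$ and fixing the rank --- gives (ii).

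\emph{Sufficiency.} Conditions (i)--(iii) define a partial order $P_2\preceq P_1$ on Kronecker canonical forms of $m\times n$ pencils (reflexivity is immediate; antisymmetry and transitivity are elementary bookkeeping with majorization --- which is stable under adding a constant list --- and with the numbers $h$, which add up along the order). On any interval $[P_2,P_1]$ this order satisfies the ascending and the descending chain conditions, so every such interval has finite height; this follows by an elementary cascade argument from the monotonicity and boundedness of $\rank$ along $\preceq$, the column/row count identities such as $n=\sum_{\mu}|W(\mu,L)|+r_0(L)+\sum_{i\ge1}r_i(L)+\sum_{i\ge1}\ell_i(L)$, and the finiteness of the dominance order on partitions of a fixed integer. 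I would then argue by induction on the height of $[P_2,P_1]$: when this height is $0$ we have $P_1=P_2$; otherwise the ascending chain condition provides a maximal element $P_3$ of $[P_2,P_1)$, which is then covered by $P_1$ and still satisfies $P_2\preceq P_3$, so that once $P_3\in\overline\orb(P_1)$ is known, the inductive hypothesis gives $P_2\in\overline\orb(P_3)$, and $\overline\orb(P_3)\subseteq\overline\orb(P_1)$ (because $\orb(P_3)\subseteq\overline\orb(P_1)$) finishes the induction by transitivity. The remaining and --- I expect --- hardest point is the realization of the covering relations of $\preceq$: one first classifies these covers into a short list of explicit moves on the Kronecker invariants (shrinking one Jordan block at an eigenvalue of $P_1$; enlarging or splitting a right or left singular block; trading a unit of Jordan or singular structure for a new pair $R_0,R_0^\top$ while dropping $\rank$ by one; spawning a size-one Jordan block at a value outside $\Lambda(P_1)$ out of a right or left singular block), in the spirit of the cover classification for bundles in \cite[Th.~3.3]{eek2}, and then, for each type, exhibits an explicit one-parameter pencil $\la B(t)+A(t)$ --- supported in a $2\times2$ or $k\times(k+1)$ corner --- that lies in $\orb(P_1)$ for $t\ne0$ and whose limit as $t\to0$ is strictly equivalent to the corresponding $P_3$. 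This combinatorial analysis together with the explicit degenerations is the core of the arguments in \cite{bongartz,pokrzywa} and \cite{dehoyos}.
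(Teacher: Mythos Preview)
The paper does not prove Theorem~\ref{orbitinclusion_th} at all: it is quoted as a known result from \cite{bongartz,pokrzywa}, reformulated in \cite{dehoyos}, and then used as a black box throughout. So there is no ``paper's own proof'' to compare against; your proposal is really a sketch of the proofs in those references.

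With that caveat, your outline is sound and matches the literature. The necessity direction is correct and even uses the paper's own machinery: the inequality $\nu(P_\mu^d(P_2))\ge\nu(P_\mu^d(P_1))$ combined with Lemma~\ref{rank_lemma} gives (iii) for finite $\mu$, the reversal trick gives $\mu=\infty$, and your auxiliary block matrix $M_d(L)$ correctly computes $\nu(M_d(L))=d\,r_0(L)-\sum_{i=1}^d r_i(L)$ via the identification of $\ker M_d(L)$ with degree-$(<d)$ polynomial null vectors of $L(\la)$, which together with $r_0(P_2)=r_0(P_1)+h$ yields (i); transposing gives (ii). This is essentially how \cite{dehoyos} packages the necessary conditions.

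For sufficiency you correctly identify the architecture used in \cite{pokrzywa,dehoyos}: reduce to covering relations of the combinatorial order and then realize each cover by an explicit one-parameter degeneration. Two points are worth flagging. First, your finite-height justification is the weakest link: intermediate pencils $P_3$ in the interval $[P_2,P_1]$ may carry eigenvalues outside $\Lambda(P_1)\cup\Lambda(P_2)$ when $h>0$, so ``finiteness of the dominance order on partitions of a fixed integer'' does not immediately bound chain lengths; the references handle this either by a direct codimension-drop argument or by restricting attention to chains with controlled spectra. Second, the classification of covers and the construction of the corresponding degenerations is precisely the nontrivial combinatorial core of \cite{pokrzywa} and \cite{dehoyos}, which you (reasonably) only gesture at. As a proof \emph{sketch} pointing to the same sources the paper cites, this is fine; as a self-contained proof it would need those two pieces filled in.
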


One of the main goals of this paper is to provide a characterization like the one in Theorem \ref{orbitinclusion_th} for closures of bundles instead of orbits. This will be provided in Theorem \ref{bundom_th}.

\section{A characterization for the inclusion of bundle closures of matrix pencils}\label{inclusion_sec}

The main result of this section is Theorem \ref{bundom_th}, which provides a characterization for the inclusion of bundle closures of matrix pencils. To prove it we use the following technical result. It is a consequence of \cite[Teorema 2.3]{tesis-inma}, adapted to the conditions in the statement. In the proof we use the notion of {\em reversal} of the matrix pencil $L(\la)=\la B+A$, defined as ${\rm rev}L:=\la A+B$.

\begin{theorem}\label{coalescence_th}
Let $\{M_k(\la)\}_{k\in\mathbb N}$ be a sequence of $m\times n$ complex matrix pencils such that:
\begin{enumerate}[{\rm (i)}]
\item $M_k(\la)\in\bun(L)$, for some $m\times n$ complex matrix pencil $L(\la)$ and for all $k\in\mathbb N$,
    \item\label{equalweyr} $\la_{1,k},\hdots,\la_{s,k}\in\overline\CC$ are distinct eigenvalues of $M_k(\la)$, for $k\in\mathbb N$, and $W(\la_{i,k_1},M_{k_1})=W(\la_{i,k_2},M_{k_2})$, for all $i=1,\hdots,s$, and all $k_1,k_2\in\mathbb N$, 
    \item\label{convergence} $\{M_k(\la)\}_{k\in\mathbb N}$ converges to $M(\la)$, and
    \item the sequence $\{\la_{i,k}\}_{k\in\mathbb N}$ converges to $\mu$, for all $i=1,\hdots,s$, where $\mu\in\overline\CC$.
\end{enumerate}
Then $\bigcup_{i=1}^sW(\la_{i,k},M_k)\prec W(\mu,M)+(h,h,\hdots)$, where $h:=\rank L-\rank M$.
\end{theorem}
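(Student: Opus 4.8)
The plan is to rephrase the asserted majorization as a family of inequalities, one for each $d\ge 1$, between null–space dimensions of the block matrices $P_\mu^d(\cdot)$ and $P_{\la_1,\dots,\la_s}^{d_1,\dots,d_s}(\cdot)$ introduced before Lemma~\ref{equivalence_lemma}, and then play off the lower semicontinuity of the rank against the fact that along the sequence $\{M_k\}$ all the relevant data is constant in $k$. Before doing this I would dispose of the case $\mu=\infty$ by passing to reversals: with $N_k:=\rev M_k$, $N:=\rev M$, $\widetilde\la_{j,k}:=1/\la_{j,k}$ (defined for $k$ large, since $\la_{j,k}\to\infty$) and $\widetilde\mu:=0$, one has $N_k\in\bun(\rev L)$, $N_k\to N$, $\widetilde\la_{j,k}\to 0$, $W(\widetilde\la_{j,k},N_k)=W(\la_{j,k},M_k)$, $W(0,N)=W(\infty,M)$, $\rank N_k=\rank M_k$ and $\rank N=\rank M$; so the statement for $(N_k,N,\widetilde\la_{j,k},\widetilde\mu)$ is the statement for $(M_k,M,\la_{j,k},\mu)$. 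Hence assume $\mu\in\CC$, so that $\la_{j,k}\in\CC$ for all $j$ and all large $k$, which is all that will be used.

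Now fix $d\ge 1$. By hypothesis~\ref{equalweyr} the lists $W^{(j)}:=W(\la_{j,k},M_k)$, and hence $U:=\bigcup_{i=1}^s W(\la_{i,k},M_k)$, are independent of $k$ (if $s=0$ there is nothing to prove, so assume $s\ge 1$). From the way unions of non-increasing lists are formed, there exist $d_1,\dots,d_s\ge 0$ with $\sum_j d_j=d$ and $\sum_{i=1}^d U_i=\sum_{j=1}^s\sum_{i=1}^{d_j}W^{(j)}_i$; dropping the zero $d_j$'s and relabelling, assume $d_1,\dots,d_t\ge 1$ with $\sum_{j=1}^t d_j=d$. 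Since $M_k\in\bun(L)$, it is strictly equivalent to a pencil with the same KCF as $L$ up to eigenvalues, so $\rank M_k=\rank L$ and $r_0(M_k)=n-\rank L$ for every $k$. Hence, for $k$ large, Lemma~\ref{equivalence_lemma} applied to the distinct finite scalars $\la_{1,k},\dots,\la_{t,k}$, together with Lemma~\ref{rank_lemma}, gives
$$
\nu\!\Big(P_{\la_{1,k},\dots,\la_{t,k}}^{d_1,\dots,d_t}(M_k)\Big)=\sum_{j=1}^t\nu\!\big(P_{\la_{j,k}}^{d_j}(M_k)\big)=\sum_{j=1}^t\Big(\sum_{i=1}^{d_j}W^{(j)}_i+d_j(n-\rank L)\Big)=\sum_{i=1}^d U_i+d(n-\rank L),
$$
a quantity independent of $k$, for matrices of the fixed size $dm\times dn$.

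Next, the matrix obtained from the formula for $P_{\la_1,\dots,\la_t}^{d_1,\dots,d_t}(\la B+A)$ by setting $\la_1=\cdots=\la_t=\mu$ is nothing but $P_\mu^d(\la B+A)$: stacking $P_\mu^{d_1}(\la B+A),\dots,P_\mu^{d_t}(\la B+A)$ along the diagonal and placing $Q_{d_j,d_{j-1}}(B)$ in super-block position $(j,j-1)$ puts $B$ precisely on the subdiagonal of the resulting $d\times d$ block bidiagonal matrix. Since $M_k\to M$ and $\la_{j,k}\to\mu$, the coefficients of $M_k$ converge to those of $M$ and $M_k(\la_{j,k})\to M(\mu)$, so $P_{\la_{1,k},\dots,\la_{t,k}}^{d_1,\dots,d_t}(M_k)\to P_\mu^d(M)$ entrywise; and as the rank is lower semicontinuous (the set of matrices of rank $\ge r$ is open), $\nu$ is upper semicontinuous, whence
$$
\nu\big(P_\mu^d(M)\big)\ \ge\ \limsup_{k}\,\nu\!\Big(P_{\la_{1,k},\dots,\la_{t,k}}^{d_1,\dots,d_t}(M_k)\Big)=\sum_{i=1}^d U_i+d(n-\rank L).
$$
Comparing with $\nu\big(P_\mu^d(M)\big)=\sum_{i=1}^d W_i(\mu,M)+d(n-\rank M)$ from Lemma~\ref{rank_lemma}, and recalling $h=\rank L-\rank M$, one gets $\sum_{i=1}^d U_i\le\sum_{i=1}^d W_i(\mu,M)+dh$; letting $d$ range over all positive integers yields $\bigcup_{i=1}^s W(\la_{i,k},M_k)\prec W(\mu,M)+(h,h,\dots)$.

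The two points needing care are the collapse identity $P_{\mu,\dots,\mu}^{d_1,\dots,d_t}=P_\mu^d$ and, more importantly, the fact that the splitting $d_1,\dots,d_t$ realizing $\sum_{i=1}^d U_i$ can be chosen \emph{independently of $k$}; together with $\rank M_k$ being constant (this is where hypotheses (i) and \ref{equalweyr} enter), this is exactly what upgrades the bare semicontinuity estimate, which on its own would only bound a $\limsup$, into the exact value of $\nu\big(P_{\la_{1,k},\dots,\la_{t,k}}^{d_1,\dots,d_t}(M_k)\big)$ used above. I expect this bookkeeping (plus the routine reversal reduction for $\mu=\infty$) to be the only real obstacle; everything else is a direct application of Lemmas~\ref{equivalence_lemma} and~\ref{rank_lemma}.
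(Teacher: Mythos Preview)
Your argument is correct and follows essentially the same route as the paper: reduce $\mu=\infty$ to $\mu\in\CC$ via reversals, then for each $d\ge 1$ split $d=d_1+\cdots+d_s$ so that the first $d$ terms of $\bigcup_i W(\la_{i,k},M_k)$ equal $\sum_j\sum_{i\le d_j}W_i(\la_{j,k},M_k)$, compute $\nu\big(P_{\la_{1,k},\ldots}^{d_1,\ldots}(M_k)\big)$ via Lemmas~\ref{equivalence_lemma} and~\ref{rank_lemma}, and compare with $\nu\big(P_\mu^d(M)\big)$ using lower semicontinuity of the rank. Your explicit mention of the collapse identity $P_{\mu,\ldots,\mu}^{d_1,\ldots,d_t}=P_\mu^d$ and of the $k$-independence of the splitting are exactly the two points the paper uses (the former is implicit in the paper's convergence claim, the latter is condition~\eqref{equalweyr}).
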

\begin{proof}
Let us first assume that $\mu\in\CC$ (namely, $\mu\neq\infty$). Then, for $k$ large enough, $\la_{i,k}\in\CC$, for all $1\leq i\leq s$. Therefore, we may assume that $\la_{i,k}\in\CC$, for all $1\leq i\leq s$ and all $k\in\mathbb N$. In the rest of this case, we essentially follow the proof of \cite[Teorema 2.3]{tesis-inma}. 

Set $m=(m_1,m_2,\hdots):=\bigcup_{i=1}^sW(\la_{i,k},M_k)$ (note that this does not depend on $k$, by condition \eqref{equalweyr} in the statement). By definition of union of Weyr characteristics, for each $d\geq 1$, there are $d_1,\hdots,d_s\geq0$ such that $d_1+\cdots+d_s=d$ and
\begin{equation}\label{sumi}
\sum_{i=1}^dm_i=\sum_{j=1}^{d_1}W_j(\la_{1,k},M_k)+\cdots+\sum_{j=1}^{d_s}W_j(\la_{s,k},M_k),
\end{equation}
for all $k\in\mathbb N$, where $\sum_{j=1}^0W_j(\la_{i,k},M_k):=0$, for $i=1,\hdots,s$.

Since the pencils $M_k(\la)$ converge to $M(\la)$ and the values $\la_{i,k}$ converge to $\mu$, for all $1\leq i\leq s$, taking into account that $d_1+\cdots+d_s=d$, we conclude that the matrices $P_{\la_{1,k},\hdots,\la_{s,k}}^{d_1,\hdots,d_s}(M_k)$ (where, if $d_i=0$ for some $1\leq i\leq s$, the block row and block column corresponding to $P_{\la_{i,k}}^{d_i}(M_k)$ is not present) converge to  $P_\mu^d(M)$. Then, by the lower semi-continuity of the rank, we get
\begin{equation}\label{rankineq}
\rank P_\mu^d(M)\leq \rank P_{\la_{1,k},\hdots,\la_{s,k}}^{d_1,\hdots,d_s}(M_k),
\end{equation}
for $k$ large enough.

From Lemma \ref{rank_lemma}, and taking into account that $\rank M_k=\rank L$, for all $k\in\mathbb N$, we get, for each $1\leq i\leq s$ and all $k\in\mathbb N$:
$$
\sum_{j=1}^{d_i}W_j(\la_{i,k},M_k)+d_i(n-\rank L)=\nu(P_{\la_{i,k}}^{d_i}(M_k))=nd_i-\rank(P_{\la_{i,k}}^{d_i}(M_k)).
$$
From this identity and \eqref{sumi} we obtain, for all $k\in\mathbb N$,
\begin{equation}\label{nubigp}
\begin{array}{ccl}
\displaystyle\sum_{i=1}^d m_i+d(n-\rank L)&=&\displaystyle\sum_{i=1}^s\left(\sum_{j=1}^{d_i}W_j(\la_{i,k},M_k)+d_i(n-\rank L)\right)\\
&=&\displaystyle nd-\sum_{i=1}^s\rank(P_{\la_{i,k}}^{d_i}(M_k))\\
&=&nd-\rank\diag(P_{\la_{1,k}}^{d_1}(M_k),\hdots,P_{\la_{s,k}}^{d_s}(M_k))
\\&=&\nu(\diag(P_{\la_{1,k}}^{d_1}(M_k),\hdots,P_{\la_{s,k}}^{d_s}(M_k)))\\
&=&\nu(P_{\la_{1,k},\hdots,\la_{s,k}}^{d_1,\hdots,d_s}(M_k)),
\end{array}
\end{equation}
where the last identity is a consequence of Lemma \ref{equivalence_lemma}. 

Now \eqref{nubigp}, together with \eqref{rankineq} and Lemma \ref{rank_lemma} imply that 
$$
\begin{array}{ccl}
\displaystyle\sum_{i=1}^d m_i+d(n-\rank L)&=&\nu(P_{\la_{1,k},\hdots,\la_{s,k}}^{d_1,\hdots,d_s}(M_k))\\&\leq&\displaystyle
\nu(P_\mu^d(M))=\sum_{i=1}^dW_i(\mu,M)+d(n-\rank M),
\end{array}
$$
or, equivalently,
$$
\sum_{i=1}^d m_i\leq \sum_{i=1}^dW_i(\mu,M)+d(\rank L-\rank M),
$$
as wanted. 

Now, let us assume that $\mu=\infty$. It is straightforward to see (but we refer otherwise to \cite[Remark 4.3]{4m-mobius}) that, for a given pencil $L(\la)$ and $\mu\in\overline\CC$, the identity $W(\mu,L)=W(\mu^{-1},{\rm rev}L)$ holds. It is also immediate to see that $\rank({\rm rev} L)=\rank L$. Moreover, if $\{M_k(\la)\}_{k\in\mathbb Z}$ is a sequence of pencils converging to $M(\la)$, then $\{{\rm rev}M_k\}_{k\in\mathbb Z}$ converges to ${\rm rev}M$. So let $L(\la), M(\la)$, and $M_k(\la)$, as well as $\la_{i,k}$, be as in the statement. Then $\la_{i,k}^{-1}$ are eigenvalues of ${\rm rev} M_k$, and they converge to the eigenvalue $0$ of ${\rm rev}M$. By the case just proved for $\mu\in\CC$, we conclude that 
$$
\bigcup_{i=1}^sW(\la_{i,k}^{-1},{\rm rev}M_k)\prec W(0,{\rm rev} M)+(h,h,\hdots).
$$
But, since $W(\la_{i,k}^{-1},{\rm rev}M_k)=W(\la_{i,k},M_k)$ and $W(0,{\rm rev}M)=W(\infty,M)$, this implies
$$
\bigcup_{i=1}^sW(\la_{i,k},M_k)\prec W(\infty, M)+(h,h,\hdots),
$$
as claimed.
\end{proof}

The following result provides a theoretical answer to the question of deciding whether a given matrix pencil belongs to the closure of the bundle of another pencil or not. 

\begin{theorem}\label{bundomprev_th}
Let $L(\la)$ and $M(\la)$ be two complex matrix pencils of the same size. Then $M(\la)\in\overline\bun(L)$ if and only if $M(\la)\in\overline\orb(\psi_c(L))$, for some map $\psi:\overline\CC\rightarrow\overline\CC$.
\end{theorem}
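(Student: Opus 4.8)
The plan is to prove the two implications separately, using Theorem~\ref{orbitinclusion_th} and Theorem~\ref{coalescence_th} as the main tools. Throughout, the infinite-eigenvalue cases will be reduced to the finite ones exactly as in the proof of Theorem~\ref{coalescence_th}: by composing with a suitable Möbius transformation -- in particular the reversal $\rev$, which satisfies $W(\mu,L)=W(\mu^{-1},\rev L)$, $\rank(\rev L)=\rank L$, and is a homeomorphism of the pencil space mapping bundles to bundles -- so that all the eigenvalues that intervene become finite. Another preliminary remark I will use is that $\bun(L)$ is a union of orbits, hence invariant under the (continuous) action of $\gln\times\glm$; therefore its closure $\overline\bun(L)$ is invariant as well, and in particular $\overline\bun(L)$ is closed under strict equivalence and contains $\overline{\orb(N)}$ for every $N\in\overline\bun(L)$.

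For the implication $M\in\overline{\orb(\psi_c(L))}\Rightarrow M\in\overline\bun(L)$, by the last remark it suffices to prove the claim $\psi_c(L)\in\overline\bun(L)$ for every $\psi\in\Psi$. Write the action of $\psi$ on $\Lambda(L)=\{\mu_1,\dots,\mu_s\}$ as a grouping into classes $I_1,\dots,I_t$ with $\psi(\mu_i)=\nu_j$ (distinct) for $i\in I_j$; after a Möbius change of variable we may assume all $\mu_i$ and all $\nu_j$ are finite. By Lemma~\ref{dual_lem}, $W(\nu_j,\psi_c(L))=\bigcup_{i\in I_j}W(\mu_i,L)=\big(\sum_{i\in I_j}S(\mu_i,L)\big)^{\sharp}$, i.e.\ the $r$-th Jordan block of $\psi_c(L)$ at $\nu_j$ has size $\sum_{i\in I_j}S_r(\mu_i,L)$. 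Now for distinct $c_1,\dots,c_d\in\CC$ and $e_1,\dots,e_d\ge 0$, the upper-bidiagonal matrix with diagonal $(-c_1,\dots,-c_1,\dots,-c_d,\dots,-c_d)$ ($e_l$ copies of $-c_l$) and ones on the superdiagonal is non-derogatory, so its associated pencil is strictly equivalent to $\bigoplus_{l=1}^d\big(\la I_{e_l}+J_{e_l}(c_l)\big)$, and as $c_1,\dots,c_d$ all tend to a common value $c$ it converges to $\la I_{e_1+\dots+e_d}+J_{e_1+\dots+e_d}(c)$. Applying this blockwise -- one such bidiagonal pencil for each pair $(j,r)$, with $c_l$ ranging over distinct points $\mu_{i,k}\to\nu_j$ for $i\in I_j$ -- and adjoining the (unchanged) singular blocks of $L$, we obtain a sequence $N_k$ with $N_k$ strictly equivalent to a pencil in $\bun(L)$ for each $k$ (namely $\varphi_k(L)$ for the bijection $\varphi_k(\mu_i)=\mu_{i,k}$), and $N_k\to N_\infty$ with ${\rm KCF}(N_\infty)={\rm KCF}(\psi_c(L))$. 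Hence $\psi_c(L)$ is strictly equivalent to $N_\infty\in\overline\bun(L)$, so $\psi_c(L)\in\overline\bun(L)$, as claimed.

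For the implication $M\in\overline\bun(L)\Rightarrow\exists\psi:\ M\in\overline{\orb(\psi_c(L))}$, take $M_k\in\bun(L)$ with $M_k\to M$. Passing to a subsequence, we may label the distinct eigenvalues of $M_k$ as $\la_{1,k},\dots,\la_{s,k}$ so that $W(\la_{i,k},M_k)=W(\mu_i,L)$ for all $i,k$, and, by compactness of $\overline\CC$, so that $\la_{i,k}\to\mu_i^{\star}\in\overline\CC$. Group the indices by the value of $\mu_i^{\star}$ into classes $I_1,\dots,I_t$ with common limit $\nu_j$ on $I_j$, and define $\psi\in\Psi$ by $\psi(\mu_i)=\nu_j$ for $i\in I_j$ (and arbitrarily off $\Lambda(L)$). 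Then $\psi_c(L)$ has eigenvalues exactly $\nu_1,\dots,\nu_t$ with $W(\nu_j,\psi_c(L))=\bigcup_{i\in I_j}W(\mu_i,L)$, and $\rank\psi_c(L)=\rank L$, so $h:=\rank\psi_c(L)-\rank M=\rank L-\rank M\ge0$ (lower semicontinuity of the rank). I will check the three conditions of Theorem~\ref{orbitinclusion_th} for $\overline{\orb(M)}\subseteq\overline{\orb(\psi_c(L))}$. Condition (iii): if $\mu=\nu_j$, Theorem~\ref{coalescence_th} applied to $\{M_k\}$ with the eigenvalues $\{\la_{i,k}:i\in I_j\}$ (all of whose hypotheses hold by construction) gives $\bigcup_{i\in I_j}W(\mu_i,L)\prec W(\nu_j,M)+(h,h,\dots)$, i.e.\ $W(\nu_j,\psi_c(L))\prec W(\nu_j,M)+(h,h,\dots)$; if $\mu\notin\{\nu_1,\dots,\nu_t\}$ then $W(\mu,\psi_c(L))=(0,0,\dots)$ and (iii) is trivial. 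Conditions (i) and (ii): since $r(M_k)=r(L)$, $\ell(M_k)=\ell(L)$ and $\rank M_k=\rank L$ for every $k$, the majorizations $r(M)\prec r(L)+(h,h,\dots)$ and $\ell(M)\prec \ell(L)+(h,h,\dots)$ follow from the lower semicontinuity of the rank applied to the block-Toeplitz-type matrices whose nullities compute the partial sums of the right and left minimal-index sequences -- the same argument that yields the necessity of (i)--(ii) in Theorem~\ref{orbitinclusion_th} (see \cite{dehoyos,pokrzywa}). Theorem~\ref{orbitinclusion_th} then gives $M\in\overline{\orb(\psi_c(L))}$.

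The step I expect to be the real obstacle is the first implication. Unlike the second, it cannot be reduced to Theorem~\ref{orbitinclusion_th}, because $\psi_c(L)$ is in general \emph{not} contained in $\overline{\orb(N)}$ for any $N\in\bun(L)$ (the Weyr characteristic of $\psi_c(L)$ at a merged eigenvalue is strictly larger than any single $W(\mu_i,L)$ that a pencil in $\bun(L)$ could present there), so one genuinely needs an explicit limiting construction realizing the coalescence \emph{inside} the bundle closure. The non-derogatory bidiagonal pencils above are exactly what makes this construction work, and the identity $W(\nu_j,\psi_c(L))=\big(\sum_{i\in I_j}S(\mu_i,L)\big)^{\sharp}$ (Lemma~\ref{dual_lem}) is what matches the limit of that construction with Definition~\ref{coalescence_def}; getting this bookkeeping right, together with the reduction of the infinite eigenvalue to a finite one, is where the care is needed.
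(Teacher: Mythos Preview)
Your proposal is correct and follows essentially the same architecture as the paper's proof: the backward implication via an explicit bidiagonal (non-derogatory) limiting construction matched to Definition~\ref{coalescence_def} through Lemma~\ref{dual_lem}, and the forward implication via a converging sequence in $\bun(L)$ together with Theorem~\ref{coalescence_th} and Theorem~\ref{orbitinclusion_th}.

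The one substantive difference is a simplification in the forward direction. The paper, after extracting a subsequence, splits the eigenvalue sequences $\{\la_{i,k}\}$ into two cases: (C1) those converging to an eigenvalue of $M$, handled by Theorem~\ref{coalescence_th}, and (C2) those staying in $\overline\CC\setminus\sigma_\varepsilon(M)$, handled by a separate rank/majorization argument showing $W(\la_i,L)\prec(h,h,\dots)$. You instead invoke compactness of $\overline\CC$ once, so that every $\la_{i,k}$ converges to some $\mu_i^\star$, and then apply Theorem~\ref{coalescence_th} uniformly at each limit value $\nu_j$, regardless of whether $\nu_j\in\Lambda(M)$. This is legitimate because Theorem~\ref{coalescence_th} does not require the limit $\mu$ to be an eigenvalue of $M$; when it is not, the conclusion simply reads $\bigcup_i W(\mu_i,L)\prec(h,h,\dots)$, which is exactly what the paper's (C2) argument produces. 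Your route is therefore slightly cleaner, at the cost of nothing, and collapses the paper's two cases into one.
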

\begin{proof}
Let us first prove the ``if" part of the statement. Assume that $M(\la)\in\overline\bun(L)$. Then, there is a sequence of pencils $\{M_k(\la)\}_{k\in\mathbb N}$ such that $M_k(\la)\in\bun(L)$ and $\{M_k(\la)\}_{k\in \mathbb N}$ converges to $M(\la)$. Since $M_k(\la)\in\bun(L)$, we conclude:
\begin{itemize}
    \item $r(M_k)=r(L)$ and $\ell(M_k)=\ell(L)$, for all $k\in\mathbb N$.
    \item All $M_k(\la)$ have the same number of distinct eigenvalues, say $s$, which is the number of distinct eigenvalues of $L(\la)$. Moreover, if $\la_1,\hdots,\la_s$ and $\lambda_{1,k},\hdots,\lambda_{s,k}$ denote, respectively, the distinct eigenvalues of $L(\la)$ and $M_k(\la)$, then $W(\lambda_{i,k},M_k)=W(\la_i,L)$, for all $k\in\mathbb N$ and all $i=1,\hdots,s$. 
\end{itemize}
If we set $h:=\rank L-\rank M$ and $\sigma_\varepsilon(M)=\bigcup_{\mu\in\Lambda(M)}B(\mu,\varepsilon)$ 
then, for all $k$ large enough, the pencils $M_k(\la)$ satisfy (see, for instance, Lemma 1.1 and its proof in \cite{dehoyos}):
\begin{itemize}
    \item[(i)] $r(M)\prec r(M_k)+(h,h,\hdots)=r(L)+(h,h,\hdots)$,
    \item[(ii)] $\ell(M)\prec \ell(M_k)+(h,h,\hdots)=\ell(L)+(h,h,\hdots)$,   
    \item[(iii)] $W(\mu,M_k)\prec W(\mu,M)+(h,h,\hdots)$, for all $\mu\in(\overline\CC-\sigma_\varepsilon(M))\cup \Lambda(M)$.
\end{itemize}
 Now, we are going to see that, by taking a subsequence of $\{M_k(\la)\}_{k\in\mathbb N}$ if necessary, we may assume that, for any $i=1,\hdots,s$ one of the following conditions holds:
\begin{enumerate}[{\rm (C1)}]
    \item\label{convergent} $\{\lambda_{i,k}\}$ converges to an eigenvalue of $M$, or
    \item\label{neighborhood} there is some $\varepsilon>0$ such that $\la_{i,k}\in\overline\CC- \sigma_\varepsilon(M)$, for all $k$ large enough.
\end{enumerate}
Assume that, for some $i=1,\hdots,s$, condition (C\ref{neighborhood}) does not hold. Then, for any $\varepsilon>0$ there is an infinite subsequence of $\{\la_{i,k}\}$ included in $\sigma_\varepsilon(M)$. Since the spectrum of $M$ is finite, there is a subsequence, $\{\lambda_{i,k_j}\}$ that converges to some eigenvalue of $M(\la)$, so $\{\lambda_{i,k_j}\}$ satisfies (C\ref{convergent}). Since $s$ is a finite number, we can keep going with this procedure, by taking a finer subsequence if necessary, until we end up with a subsequence of $\{M_k(\la)\}$, 
whose eigenvalues $\{\la_{i,k}\}$ satisfy either (C\ref{convergent}) or (C\ref{neighborhood}), for all $i=1,\hdots,s$.

Now, let us assume that the eigenvalues $\lambda_{i,k}$ converge to some distinct $\mu_1,\hdots,\mu_d$, for all $i=1,\hdots,t$ and $t\leq s$, where $\mu_1,\hdots,\mu_d$ are eigenvalues of $M(\la)$. Note, however, that not all eigenvalues of $M_k(\la)$ must converge to eigenvalues of $M(\la)$, since some of them can satisfy condition (C\ref{neighborhood}) above, and that not every eigenvalue of $M(\la)$ is necessarily obtained in this way, because some new eigenvalues not coming from eigenvalues of $M_k(\la)$ could arise in the limit $M(\la)$. In particular, we are assuming that the eigenvalues $\la_{i,k}$, for $i=t+1,\hdots,s$, satisfy (C\ref{neighborhood}) above. Let us gather all eigenvalues $\lambda_{i,k}$ that converge to the same eigenvalue $\mu_j$ of $M(\la)$ in the following way: we decompose the set of indices $\{1,\hdots,t\}$ as the union of $d$ disjoint sets of indices, denoted by $I_1,\hdots, I_d$ in such a way that the eigenvalues $\lambda_{i,k}$ with $i\in I_j$ converge to $\mu_j$, for $j=1,\hdots,d$.
This means, in particular, that $M(\la)$ has, at least, $d$ different eigenvalues, namely $\mu_1,\hdots,\mu_d$.

Let $\widetilde L:=\psi_c(L)$, where $\psi:\overline{\CC}\rightarrow\overline{\CC}$ is such that $\psi(\la_i)=\mu_j$, for $i\in I_j$ and $j=1,\hdots,d$, $\psi(\la_i)=\la_i$, for $i=t+1,\hdots,s$, and $\psi(\mu)\not\in\{\mu_1,\hdots,\mu_d,\la_{t+1},\hdots,\la_s\}$, for $\mu\not\in\{\la_1,\hdots,\la_s\}$. Then, $\mu_1,\hdots,\mu_d,\la_{t+1},\hdots,\la_s$ are the distinct eigenvalues of $\widetilde L$, and:
\begin{itemize}
    \item[(iv)] $r(M_k)=r(L)=r(\widetilde L)$,
    \item[(v)] $\ell(M_k)=\ell(L)=\ell(\widetilde L)$,
    \item[(vi)] $W(\mu,\widetilde L)\prec W(\mu,M)+(h,h,\hdots)$, for all $\mu\in\overline\CC$.
\end{itemize}
Claim (vi) is a consequence of the following facts:
\begin{itemize}
    \item If $\mu$ is not an eigenvalue of $\widetilde L(\la)$, then $(0)=W(\mu,\widetilde L)\prec W(\mu,M)+(h,h,\hdots)$, since $h\geq0$.
    \item $W(\mu_j,\widetilde L)=\bigcup_{i\in I_j}W(\la_i,L)=\bigcup_{i\in I_j}W(\la_{i,k},M_k)\prec W(\mu_j,M)+(h,h,\hdots)$, for $j=1,\hdots,d$, where the first identity is a consequence of the definition of $\psi_c(L)$, and the majorization is a consequence of Theorem \ref{coalescence_th}.
    \item For the remaining eigenvalues of $\widetilde L$, namely $\la_i$ with $i=t+1,\hdots,s$, we have $W(\la_i,\widetilde L)=W(\la_i,L)=W(\la_{i,k},M_k)\prec W(\la_{i,k},M)+(h,h,\hdots)\break=(0)+(h,h,\hdots)\prec W(\la_i,M)+(h,h,\hdots)$, where the first majorization is a consequence of (iii) (which applies because $\la_{i,k}$ satisfies (C\ref{neighborhood})), the subsequent equality is a consequence of (C\ref{neighborhood}), which implies that $\la_{i,k}$ is not an eigenvalue of $M(\la)$, and the last majorization is immediate. 
\end{itemize}
Now, (i), (ii) together with (iv)--(vi) imply:
\begin{itemize}
    \item[(a)] $r(M)\prec r(\widetilde L)+(h,h,\hdots)$,
    \item[(b)] $\ell(M)\prec \ell(\widetilde L)+(h,h,\hdots)$,
    \item[(c)] $W(\mu,\widetilde L)\prec W(\mu ,M)+(h,h,\hdots)$, for all $\mu\in\overline\CC$.
\end{itemize}
But (a)--(c) in turn imply that $M(\la)\in\overline\orb(\widetilde L)$, according to Theorem \ref{orbitinclusion_th}, and this proves the ``if" part of the statement.

Now, let us prove the ``only if" part, so let $L(\la)$ be given. We first prove that $\overline\bun(L)$ contains $\overline\orb(\psi_c(L))$, for all maps $\psi:\overline\CC\rightarrow\overline\CC$. 

We are going to first prove that  $\psi_c( L)\in\overline\bun(L)$. To this end, note that $\bun(L)=\bun(P\cdot L(\la)\cdot Q)$, for any invertible matrices $P,Q$ of appropriate size. Therefore, we can assume that $L(\la)$ is given in KCF so, without loss of generality, $L(\la)$ is of the form 
$$
L(\la)=\diag(J_{\lambda_1}(\lambda),\hdots,J_{\lambda_s}(\la),\widehat L(\la)),
$$ 
where 
$$
J_{\lambda_i}(\la):=\displaystyle\bigoplus _{j=1}^{g_i} \left(\la I_{\alpha_{j,i}}+J_{\alpha_{j,i}}(\lambda_i)\right),\qquad\mbox{for $i=1,\hdots,s$,}
$$
with $\alpha_{1,i}\geq\cdots\geq\alpha_{g_i,i}$, for $i=1,\hdots,s$, and  $\Lambda(\widehat L)\cap\{\la_1,\hdots,\la_s\}=\emptyset$. Unlike what happens in the first part of the proof (the ``if" part), here $\la_1,\hdots,\la_s$ denote some distinct eigenvalues, but not necessarily all the eigenvalues of $L(\la)$. Also, for simplicity we are assuming that $\la_1,\hdots,\la_s$ are all finite eigenvalues. For the case where some $\la_k$, for $1\leq k\leq s$, is the infinite eigenvalue, the arguments are also valid just replacing $\la I_{\alpha_{j,k}}+J_{\alpha_{j,k}}(\la_k)$ by $I_{\alpha_{j,k}}+\la N_{\alpha_{j,k}}$, and incorporating the appropriate changes in the corresponding blocks of the forthcoming arguments.

Let us first prove that, if $\psi\in\Psi$ is such that $\psi^{-1}(\mu)=\{\la_1,\hdots,\la_s\}\cup S$, with $S\cap\Lambda(L)=\emptyset$, for some $\mu\in\overline{\CC}$, then $\psi_c(L)\in\overline\bun(L)$.

By definition of $\psi_c(L)$, it suffices to find some $\widetilde L(\la)$ such that
$\widetilde L(\la)\in \overline\bun(L)$ and ${\rm KCF}(\widetilde L)=\diag (\widetilde J_{\mu}(\la),\widehat L(\la))$, where $\mu$ is the only eigenvalue of $\widetilde J_{\mu}(\la)$ (and is not an eigenvalue of $\widehat L(\la)$), and such that $W(\mu,\widetilde L)=\bigcup_{i=1}^s W(\la_i,L)$. 

Without loss of generality, we may assume that $g_1\geq\cdots\geq g_s$. For brevity, we denote by $E_m(i_1,\hdots,i_t)$ the $m\times m$ matrix having an entry equal to $1$ in the positions $(i_1,i_1+1),\hdots,(i_t,i_t+1)$, and zeroes elsewhere. Then, set
$$
\begin{array}{rcl}
L_k(\la)&:=&\diag\left(\displaystyle\bigoplus_{j=1}^{g_s}\left(\bigoplus_{i=1}^s \left(\la I_{\alpha_{j,i}}+J_{\alpha_{j,i}}\left(\mu+\frac{i}{k}\right)\right)\right.+\right.\\&& \left. E_{\alpha_{j,1}+\cdots+\alpha_{j,s}}(\alpha_{j,1},\alpha_{j,1}+\alpha_{j,2},\hdots,\alpha_{j,1}+\cdots+\alpha_{j,s-1})\right),\\
&&\displaystyle\bigoplus_{j=g_s+1}^{g_{s-1}}\left(\bigoplus_{i=1}^{s-1} \left(\la I_{\alpha_{j,i}}+J_{\alpha_{j,i}}\left(\mu+\frac{i}{k}\right)\right)\right.+\\&&\left.  E_{\alpha_{j,1}+\cdots+\alpha_{j,s-1}}(\alpha_{j,1},\alpha_{j,1}+\alpha_{j,2},\hdots,\alpha_{j,1}+\cdots+\alpha_{j,s-2})\right),\\
&&\left.,\hdots,\displaystyle\bigoplus_{j=g_2+1}^{g_1}\left(\la I_{\alpha_{j,1}}+J_{\alpha_{j,1}}\left(\mu+\frac{1}{k}\right)\right),\widehat L(\la)\right),
\end{array}
$$
where the addends of the form $\bigoplus_{j=g_\ell+1}^{g_{\ell-1}}$, with $g_{\ell-1}=g_\ell$ are empty. Note that:
\begin{itemize}
    \item $L_k(\la)\in\bun(L)$, since, following the same notation as above, $$
    {\rm KCF}(L_k)=\diag(J_{\mu+\frac{1}{k}}(\la),\hdots,J_{\mu+\frac{s}{k}}(\la),\widehat L(\la)),
    $$
    that is, $J_{\mu+\frac{i}{k}}(\la)$ is obtained from $J_{\la_i}(\la)$ just replacing the eigenvalue $\la_i$ by the eigenvalue $\mu+\frac{i}{k}$, for $i=1,\hdots,s$.
    \item The sequence $\{L_k\}_{k\in\mathbb N}$ converges to 
    $$
    \begin{array}{ccl}
\widetilde L(\la)&:=&\diag\left(\displaystyle\bigoplus_{j=1}^{g_s}\left(\bigoplus_{i=1}^s\left(\la I_{\alpha_{j,i}}+J_{\alpha_{j,i}}(\mu)\right)\right.+\right.\\&&  \left.E_{\alpha_{j,1}+\cdots+\alpha_{j,s}}(\alpha_{j,1},\alpha_{j,1}+\alpha_{j,2},\hdots,\alpha_{j,1}+\cdots+\alpha_{j,s-1})\right),\\
&&\displaystyle\bigoplus_{j=g_s+1}^{g_{s-1}}\left(\bigoplus_{i=1}^{s-1} \left(\la I_{\alpha_{j,i}}+J_{\alpha_{j,i}}(\mu)\right)\right.+\\&&  \left.E_{\alpha_{j,1}+\cdots+\alpha_{j,s-1}}(\alpha_{j,1},\alpha_{j,1}+\alpha_{j,2},\hdots,\alpha_{j,1}+\cdots+\alpha_{j,s-2})\right),\\
&&\left.,\hdots,\displaystyle\bigoplus_{j=g_2+1}^{g_1}\left(\la I_{\alpha_{j,1}}+J_{\alpha_{j,1}}(\mu)\right),\widehat L(\la)\right)\\
&=&\diag(\widetilde J_\mu(\la),\widehat L(\la)),
\end{array}
    $$
where $\mu$ is the only eigenvalue of $\widetilde J_{\mu}(\la)$, and $W(\mu,\widetilde J_\mu)=\bigcup_{i=1}^sW(\lambda_i,J_{\lambda_i})=\bigcup_{i=1}^sW(\lambda_i,L)$, by Lemma \ref{dual_lem} (note that, by construction,\break $S(\mu,\widetilde J_\mu)=\Sigma_{i=1}^s S(\la_i,L)$).
\end{itemize}
Therefore, $\widetilde L(\la) \in\overline\bun(L)$ and, moreover, it is obtained from $L(\la)$ after coalescing $\la_1,\hdots,\la_s$ to the eigenvalue $\mu$, and keeping the remaining eigenvalues of $L(\la)$ unchanged.

Using the same approach as above, applied to the block $\widehat L(\la)$, for the other values $\widehat\mu\in\overline\CC$ such that $\psi^{-1}(\widehat\mu)\cap\Lambda(\widehat L)\neq\emptyset$, we can construct a matrix pencil $L'(\la)\in\overline\bun(L)$ such that $L'(\la)=\psi_c(L)$ and $\psi^{-1}(\mu)=\{\lambda_1,\hdots,\la_s\}\cup S$, with $S\cap\Lambda(L)=\emptyset$. Since $\{\la_1,\hdots,\la_s\}$ is any subset of $\Lambda(L)$, this proves that $\psi_c(L)\in\overline\bun(L)$ for any $\psi\in\Psi$, as wanted.

Since the particular values of $\mu$ and $\widehat \mu$ are not relevant, we conclude that $\bun(\psi_c(L))\subseteq\overline\bun(L)$, for any $\psi\in\Psi$. This immediately implies $\overline\bun(\psi_c(L))\subseteq\overline\bun(L)$, by definition of closure.

Finally, if $M(\la)\in\overline\orb(\psi_c(L))$, for some $\psi\in\Psi$, then $M(\la)\in\overline\orb(\psi_c(L))\subseteq\overline\bun(\psi_c(L) )\subseteq\overline\bun(L)$, and we are done.
\end{proof}

\begin{remark}\label{bundleclosure_rem} Note that Theorem \ref{bundomprev_th} is equivalent to the identity:
   \begin{equation}\label{bundleclosureunion} \overline\bun(L)=\displaystyle\bigcup_{\psi\in\Psi}\overline{\orb}(\psi_c(L)),
    \end{equation}
    for any complex matrix pencil $L(\la)$.
\end{remark}

There is a striking difference between the statement of Theorem \ref{orbitinclusion_th} and the one of Theorem \ref{bundomprev_th}. More precisely, in Theorem \ref{orbitinclusion_th} it is implicitly assumed that $P_2(\la)\in\overline\orb(P_1)$ implies $\overline\orb(P_2)\subseteq\overline\orb(P_1)$. This is clearly true, since $P_2(\la)\in\overline\orb(P_1)$ implies that $RP_2(\la)S\in\overline\orb(P_1)$, for any pair of invertible matrices $R$ and $S$, which in turns implies $\orb(P_2)\subseteq\overline\orb(P_1)$ and this, by definition of closure, implies $\overline\orb(P_2)\subseteq\overline\orb(P_1)$. However, Theorem \ref{bundomprev_th} does not yet provide a characterization for the inclusion of bundle closures, since it is not so clear that $M(\la)\in\overline\bun(L)$ implies $\overline{\bun}(M)\subseteq\overline{\bun}(L)$. This is, indeed, true, and it is a consequence of Lemma \ref{inclusion_lemma}.

\begin{lemma}\label{inclusion_lemma}
Let $L(\la)$ and $M(\la)$ be two complex matrix pencils of the same size. If $M(\la)\in\overline\bun(L)$ then $\overline\bun(M)\subseteq\overline\bun(L)$.
\end{lemma}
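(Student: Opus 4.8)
The plan is to prove $\bun(M)\subseteq\overline{\bun}(L)$; since $\overline{\bun}(L)$ is closed and $\overline{\bun}(M)$ is by definition the closure of $\bun(M)$, this is equivalent to the assertion. Because $\bun(M)=\bigcup_{\varphi\in\Phi}\orb(\varphi(M))$, it suffices to fix an arbitrary one‑to‑one $\varphi\in\Phi$ and show $\orb(\varphi(M))\subseteq\overline{\bun}(L)$. By the hypothesis $M\in\overline{\bun}(L)$ and Theorem \ref{bundomprev_th}, there is a map $\psi:\overline\CC\to\overline\CC$ with $M(\la)\in\overline{\orb}(\widetilde L)$, where $\widetilde L:=\psi_c(L)$; I work with this fixed $\psi$ and $\widetilde L$ from now on.

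The first step I would carry out is a transfer principle: for every $\varphi\in\Phi$, the relation $M\in\overline{\orb}(\widetilde L)$ implies $\varphi(M)\in\overline{\orb}(\varphi(\widetilde L))$. This is immediate from the characterization in Theorem \ref{orbitinclusion_th}, because applying a one‑to‑one map $\varphi$ to the eigenvalues of a pencil $N$ leaves its right and left singular structures and its rank unchanged and only relabels its Weyr characteristics: $r(\varphi(N))=r(N)$, $\ell(\varphi(N))=\ell(N)$, $\rank\varphi(N)=\rank N$, $W(\varphi(\mu),\varphi(N))=W(\mu,N)$ for all $\mu\in\overline\CC$, and $W(\tau,\varphi(N))=(0)$ for $\tau\notin\varphi(\overline\CC)$. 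Hence the three majorization conditions of Theorem \ref{orbitinclusion_th} for $M\in\overline{\orb}(\widetilde L)$ pass verbatim to those for $\varphi(M)\in\overline{\orb}(\varphi(\widetilde L))$, with the same $h=\rank\widetilde L-\rank M=\rank\varphi(\widetilde L)-\rank\varphi(M)$.

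The second step is to show $\varphi(\widetilde L)\in\overline{\bun}(L)$. Setting $\chi:=\varphi\circ\psi$, which is again a map $\overline\CC\to\overline\CC$, I would verify that $\varphi(\widetilde L)=\varphi(\psi_c(L))$ is strictly equivalent to $\chi_c(L)$ by checking the three properties of Definition \ref{coalescence_def}: the singular parts are untouched, so $r(\varphi(\psi_c(L)))=r(\psi_c(L))=r(L)$ and likewise for $\ell$, while injectivity of $\varphi$ gives $W(\mu,\varphi(\psi_c(L)))=W(\varphi^{-1}(\mu),\psi_c(L))=\bigcup_{\alpha\in\chi^{-1}(\mu)\cap\Lambda(L)}W(\alpha,L)$ for all $\mu\in\overline\CC$ (read as $(0)$ when $\mu\notin\varphi(\overline\CC)$), which is exactly the third defining property of $\chi_c(L)$. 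Since $\chi\in\Psi$, Remark \ref{bundleclosure_rem} gives $\overline{\orb}(\chi_c(L))\subseteq\overline{\bun}(L)$; as $\varphi(\widetilde L)$ is strictly equivalent to $\chi_c(L)$, it follows that $\varphi(\widetilde L)\in\overline{\bun}(L)$.

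Finally I would combine the two steps, using that $\overline{\bun}(L)$ is invariant under strict equivalence: $\bun(L)$ is a union of orbits, hence invariant, and for fixed invertible $P,Q$ the map $X\mapsto PXQ$ is a homeomorphism of the space of $m\times n$ matrix pencils and therefore preserves closures, so $\overline{\bun}(L)$ is invariant; consequently any pencil in $\overline{\bun}(L)$ has its whole orbit, and hence its orbit closure, inside $\overline{\bun}(L)$. Applying this to $\varphi(\widetilde L)$ yields $\overline{\orb}(\varphi(\widetilde L))\subseteq\overline{\bun}(L)$, and the transfer principle then gives $\varphi(M)\in\overline{\orb}(\varphi(\widetilde L))\subseteq\overline{\bun}(L)$, whence $\orb(\varphi(M))\subseteq\overline{\bun}(L)$. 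Since $\varphi\in\Phi$ was arbitrary, $\bun(M)\subseteq\overline{\bun}(L)$, and passing to closures concludes the proof.

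I expect the main difficulty to be conceptual rather than computational. A direct attack through general maps in $\Psi$ — producing, for each $\varphi\in\Psi$, some $\rho\in\Psi$ with $\varphi_c(M)\in\overline{\orb}(\rho_c(L))$ — must cope with the mismatch between the eigenvalues of $M$ and those of $\widetilde L=\psi_c(L)$: in the limit, Jordan structure of $\widetilde L$ may be absorbed into the singular part of $M$, and $M$ may acquire new eigenvalues, which would force a careful ``fresh eigenvalue'' bookkeeping together with an auxiliary lemma asserting that majorization by $(h,h,\hdots)$ is preserved under taking unions of non‑increasing integer lists. The argument above sidesteps this by exploiting that $\bun(M)$ is, by definition, generated by the one‑to‑one relabelings $\varphi\in\Phi$, for which $\varphi\circ\psi$ is again a map and the bookkeeping is transparent; the only place that still requires genuine care is the verification of the transfer principle, in particular the treatment of the infinite eigenvalue and of targets outside $\varphi(\overline\CC)$ when $\varphi$ is not onto.
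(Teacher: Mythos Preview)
Your proof is correct and follows essentially the same approach as the paper's: both start from Theorem~\ref{bundomprev_th} to obtain $M\in\overline{\orb}(\psi_c(L))$, then for $\varphi\in\Phi$ establish $\varphi(M)\in\overline{\orb}((\varphi\circ\psi)_c(L))$ via the majorization conditions of Theorem~\ref{orbitinclusion_th}, and conclude via \eqref{bundleclosureunion}. The only difference is organizational: you factor the argument into a ``transfer principle'' ($M\in\overline{\orb}(\widetilde L)\Rightarrow\varphi(M)\in\overline{\orb}(\varphi(\widetilde L))$) plus the identification $\varphi(\psi_c(L))=(\varphi\circ\psi)_c(L)$, whereas the paper verifies conditions (a$'$)--(c$'$) for $M'\in\overline{\orb}((\varphi\circ\psi)_c(L))$ directly in one pass, but the underlying computations with Weyr characteristics are identical.
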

\begin{proof}
We are first going to prove that, if $L(\la)$ and $M(\la)$ are as in the statement and $M'(\la)\in\bun(M)$, then $M'(\la)\in\overline\bun(L)$.

Since $M(\la)\in\overline\bun(L)$, Theorem \ref{bundomprev_th} implies that $M(\la)\in\overline{\orb}(\psi_c(L))$, for some $\psi\in\Psi$. Also, since $M'(\la)\in\bun(M)$, then $M'(\la)=R\,\varphi(K_M)\,S$, for some $R,S$ invertible and $\varphi\in\Phi$ (namely, $\varphi$ is one-to-one). In particular,  $\mu\in\Lambda(M)$ if and only if $\varphi(\mu)\in\Lambda(M')$ or, since $\varphi$ is one-to-one, $\varphi^{-1}(\mu)\in\Lambda(M)$ if and only if $\mu\in\Lambda(M')$, and, moreover:
\begin{equation}\label{varphi}
    W(\varphi^{-1}(\mu),M)=W(\mu,M').
\end{equation}

Now, we claim that 
\begin{equation}\label{inclusion}
    M'(\la)\in\overline{\orb}((\varphi\circ\psi)_c(L)).
\end{equation} 
Note that this immediately implies that $M'(\la)\in\overline\bun(L)$, by Theorem \ref{bundomprev_th} again.

In order to prove \eqref{inclusion}, we use Theorem \ref{orbitinclusion_th}. More precisely, we need to prove, for $h:=\rank(\varphi\circ\psi)_c(L)-\rank M'$:
\begin{enumerate}[{\rm (a')}]
    \item $r(M')\prec r((\varphi\circ\psi)_c(L))+(h,h,\hdots)$.
    \item $\ell(M')\prec \ell((\varphi\circ\psi)_c(L))+(h,h,\hdots)$.
    \item $W(\mu,(\varphi\circ\psi)_c(L))\prec W(\mu,M')+(h,h,\hdots)$, for all $\mu\in\overline\CC$.
\end{enumerate}
Using $M'(\la)\in\bun(M)$ and Definition \ref{coalescence_def}, we get
\begin{eqnarray}
     r(M)&=&r(M'),\label{righteq}\\\ell(M)&=&\ell(M')\label{lefteq},\\
     r(\psi_c(L))&=&r((\varphi\circ\psi)_c(L))=r(L),\label{righteq2}\\
     \ell(\psi_c(L))&=&\ell((\varphi\circ\psi)_c(L))=\ell(L).\label{lefteq2}
\end{eqnarray}
In particular, $\rank M=\rank M'$ and $\rank (\varphi\circ\psi)_c(L)=\rank L=\rank \psi_c(L)$. This implies that
\begin{equation}\label{equalh}
h=\rank\psi_c(L)-\rank M=\rank L-\rank M.    
\end{equation}
Now, to see that claims (a') and (b') are true note that, since $M(\la)\in\overline{\orb}(\psi_c(L))$, by Theorem \ref{orbitinclusion_th}, we have:
\begin{enumerate}[{\rm (a)}]
    \item $r(M)\prec r(\psi_c(L))+(h,h,\hdots)$.
    \item $\ell(M)\prec \ell(\psi_c(L))+(h,h,\hdots)$.
\end{enumerate}
Then, (a) and (b) together with \eqref{righteq}--\eqref{equalh} immediately imply (a') and (b').

To prove (c'), let $\mu\in\overline\CC$. Then, 
$$
\begin{array}{ccl}
     W(\mu,(\varphi\circ\psi)_c(L))&=& \bigcup_{\mu_i\in(\varphi\circ\psi)^{-1}(\mu)}W(\mu_i,L)=\bigcup_{\mu_i\in(\psi)^{-1}(\varphi^{-1}(\mu))}W(\mu_i,L) \\
     &=& W(\varphi^{-1}(\mu),\psi_c(L))\prec W(\varphi^{-1}(\mu),M)+(h,h,\hdots)\\&=&W(\mu,M')+(h,h,\hdots),
\end{array}
$$
where the majorization is a consequence of $M(\la)\in\overline{\orb}(\psi_c(L))$, and the last identity is \eqref{varphi}. Note that, by \eqref{equalh}, the values of $h$ in the last two expressions correspond to $\rank \psi_c(L)-\rank M$ in the first one, and to $\rank(\varphi\circ\psi)_c(L)-\rank M'$ in the second one.

We have seen so far that, if $M(\la)\in\overline\bun(L)$, then $\bun(M)\subseteq\overline\bun(L)$. By definition of closure, this implies $\overline\bun(M)\subseteq\overline\bun(L)$, as wanted.
\end{proof}

Theorem \ref{bundomprev_th}, together with Lemma \ref{inclusion_lemma}, immediately imply Theorem \ref{bundom_th}.

\begin{theorem}\label{bundom_th} {\rm (Characterization of the inclusion between bundle closures).}
Let $L(\la)$ and $M(\la)$ be two matrix pencils of the same size. Then $\overline\bun(M)\subseteq\overline\bun(L)$ if and only if $M(\la)\in\overline\orb(\psi_c(L))$, for some map $\psi:\overline\CC\rightarrow\overline\CC$.
\end{theorem}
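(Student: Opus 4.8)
The plan is to assemble Theorem \ref{bundom_th} directly from the two results that immediately precede it, namely Theorem \ref{bundomprev_th} and Lemma \ref{inclusion_lemma}, so the proof is short and purely a matter of combining the ``pointwise'' membership statement with the ``closure-inclusion'' statement. The one substantive thing that still needs to be said is the easy direction: that $\overline\bun(M)\subseteq\overline\bun(L)$ forces $M(\la)\in\overline\orb(\psi_c(L))$ for some $\psi$. For this I would note that $M(\la)\in\bun(M)\subseteq\overline\bun(M)\subseteq\overline\bun(L)$, so $M(\la)\in\overline\bun(L)$, and then apply the ``if and only if'' of Theorem \ref{bundomprev_th} to conclude $M(\la)\in\overline\orb(\psi_c(L))$ for some map $\psi\colon\overline\CC\rightarrow\overline\CC$.

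For the converse, suppose $M(\la)\in\overline\orb(\psi_c(L))$ for some $\psi$. By Theorem \ref{bundomprev_th} (the ``if'' part, i.e.\ the direction that $M\in\overline\orb(\psi_c(L))$ implies $M\in\overline\bun(L)$, which is exactly what is proven there), we get $M(\la)\in\overline\bun(L)$. Now Lemma \ref{inclusion_lemma} applies verbatim with this $L$ and $M$, yielding $\overline\bun(M)\subseteq\overline\bun(L)$, which is the desired conclusion. So the two implications are each a one-line chaining of an already-established equivalence with an already-established inclusion.

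I do not expect any genuine obstacle here: all the work has been front-loaded into Theorem \ref{coalescence_th} (the majorization coming from lower semicontinuity of rank applied to the big block matrices $P^{d_1,\dots,d_s}_{\la_1,\dots,\la_s}$), into the explicit perturbation construction inside Theorem \ref{bundomprev_th} showing $\psi_c(L)\in\overline\bun(L)$, and into Lemma \ref{inclusion_lemma} (which handles the extra composition-with-a-bijection bookkeeping, $(\varphi\circ\psi)_c$, needed to upgrade a single membership to an inclusion of bundle closures). The only point worth flagging in the writeup is that one should be slightly careful that $\Phi\subseteq\Psi$, so that a $\psi$ furnished by one result can legitimately be fed into the other, and that $\psi_c(L)$ is well defined only up to strict equivalence — but since $\overline\orb$ and $\overline\bun$ are invariant under strict equivalence of the reference pencil, this causes no difficulty. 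Accordingly the proof is essentially the single sentence: ``combine Theorem \ref{bundomprev_th} and Lemma \ref{inclusion_lemma},'' spelled out in the two directions as above.

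\begin{proof}
Assume first that $\overline\bun(M)\subseteq\overline\bun(L)$. Since $M(\la)\in\bun(M)\subseteq\overline\bun(M)\subseteq\overline\bun(L)$, Theorem \ref{bundomprev_th} gives a map $\psi\colon\overline\CC\rightarrow\overline\CC$ with $M(\la)\in\overline\orb(\psi_c(L))$.

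Conversely, assume $M(\la)\in\overline\orb(\psi_c(L))$ for some map $\psi\colon\overline\CC\rightarrow\overline\CC$. By Theorem \ref{bundomprev_th}, this implies $M(\la)\in\overline\bun(L)$. Then Lemma \ref{inclusion_lemma} yields $\overline\bun(M)\subseteq\overline\bun(L)$, as claimed.
\end{proof}
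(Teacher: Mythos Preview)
Your proposal is correct and is exactly the approach the paper takes: the paper simply states that Theorem \ref{bundomprev_th} together with Lemma \ref{inclusion_lemma} immediately imply Theorem \ref{bundom_th}, and you have spelled out the two one-line implications precisely as intended.
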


In other words, Theorem \ref{bundom_th} says that $\overline\bun(M)\subseteq\overline\bun(L)$ if and only if ${\rm KCF}(M)$ is obtained from ${\rm KCF}(L)$ after coalescing eigenvalues and applying the dominance rules in Theorem \ref{orbitinclusion_th}. The same result is stated in \cite[Th. 2.6]{eek2} for bundles of matrices instead of matrix pencils (see Theorem \ref{bundom-matrices_th}).

\section{Bundles are open in their closure}\label{open_sec}

The fact that bundles are open in their closure will be an almost immediate consequence of the fact that $\overline{\bun}(L)$ is the union of $\bun(L)$ with a finite number of other bundles whose closure is strictly included in $\overline{\bun}(L)$. This fact is implicitly assumed in the developments carried out in \cite{eek2}. More precisely, the set of matrix pencils is claimed to be a {\em stratified manifold} (see \cite[p. 670]{eek2}), namely ``the union of non-intersecting manifolds whose closure is the finite union of itself with strata of smaller dimensions". The strata can be either orbits (for singular matrix pencils) or bundles.
Moreover, much effort has been paid, by different authors, to analyze and/or to describe the stratification of bundles (and orbits) of general and structured sets of matrices and matrix pencils (to cite just a few of the works dealing with bundles, see \cite{dk14,eek2,pervouchine} and the references therein),  as well as for matrix polynomials \cite{d17,djkvd}. In this context, a Java-based tool, called {\em Stratigraph}, has been developed \cite{ejk} for computing and displaying the closure hierarchies of the strata (bundles and orbits). However, we have not found in the literature a proof of the fact that the closure of a bundle is indeed the union of itself with other bundles of smaller dimension (for the case of orbits, it is a consequence of general results on group actions, see, for instance, the Proposition in \cite[p. 60]{humphreys}). 

The main goal of this section is to prove that bundles are open in their closure. We will prove it for bundles of matrix pencils under strict equivalence (in Theorem \ref{open_th}), as well as for bundles of matrices under similarity (in Section \ref{matrixbun_sec}) and for bundles of matrix polynomials (in Section \ref{polybun_sec}).

For bundles of matrix pencils, and as mentioned above, we are going to prove first a more general result (Theorem \ref{union_th}), namely that $\overline{\bun}(L)$ is the union of $\bun(L)$ together with a finite number of other bundles whose closure in strictly included in $\overline{\bun}(L)$. 

We start with the following well-known result, that we include here for the sake of completeness.

\begin{lemma}\label{finite-lemma}
The number of different bundles of complex matrix pencils with size $m\times n$ is finite.
\end{lemma}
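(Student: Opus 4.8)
The plan is to reduce the claim to a counting argument over the finitely many possible Kronecker-type data. First I would observe that every bundle of $m\times n$ complex matrix pencils is completely determined, as a set, by the Kronecker canonical form of any of its members \emph{up to the values of the eigenvalues}. Concretely, by the definition $\bun(L)=\bigcup_{\varphi\in\Phi}\orb(\varphi(L))$, two pencils $L$ and $M$ generate the same bundle precisely when $\mathrm{KCF}(M)$ is obtained from $\mathrm{KCF}(L)$ by relabelling the distinct eigenvalues via some one-to-one map of $\overline{\CC}$. Hence a bundle is determined by the following discrete data: the right singular structure $r(L)$, the left singular structure $\ell(L)$, a nonnegative integer $s$ counting the number of distinct eigenvalues, and an unordered collection of $s$ Weyr (equivalently Segre) characteristics, one for each eigenvalue, recording the sizes of the associated Jordan blocks — but \emph{not} the eigenvalues themselves.

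The key step is then to bound each of these pieces of data. Since every canonical block of $\mathrm{KCF}(L)$ contributes at least $1$ to the row count $m$ (for Jordan blocks, left singular blocks $R_k(\la)^\top$ with $k\ge 1$, and right singular blocks $R_k(\la)$ with $k\ge 0$, noting $R_0(\la)$ is a null column which contributes to $n$; a symmetric remark handles $R_0(\la)^\top$), the total number of canonical blocks is at most $m+n$, and the size of each block is at most $\max\{m,n\}$. Therefore: the singular characteristics $r(L)$ and $\ell(L)$ are partitions with at most $\min\{m,n\}$ parts, each part at most $\max\{m,n\}$, so there are finitely many of them; the number $s$ of distinct eigenvalues is at most $\min\{m,n\}$; and for each eigenvalue the Segre characteristic is a partition of an integer $\le \min\{m,n\}$, of which there are finitely many. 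Consequently there are only finitely many admissible tuples $(r,\ell,s,\{S(\mu_1,L),\dots,S(\mu_s,L)\})$, hence only finitely many bundles.

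I would write this up as a short argument: fix $m,n$; show a bundle is determined by the above data; enumerate the (finitely many) possibilities; conclude. The only mildly delicate point — and the step I would flag as the main (minor) obstacle — is bookkeeping the edge cases correctly: the infinite eigenvalue must be counted among the $s$ distinct eigenvalues, and the degenerate singular blocks $R_0(\la)$ (a zero column) and $R_0(\la)^\top$ (a zero row) must be accounted for so that one genuinely bounds the number and sizes of \emph{all} blocks by $m$ and $n$. Once these are handled, the finiteness is immediate, since a finite product of finite sets is finite. No appeal to any earlier result in the paper is needed beyond the description of the KCF and the definitions of $\orb(\cdot)$ and $\bun(\cdot)$.
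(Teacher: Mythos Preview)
Your proposal is correct and follows essentially the same approach as the paper: both identify that a bundle is determined by finitely many discrete invariants (number of distinct eigenvalues, the partial multiplicities for each, and the left and right minimal indices), bound each of these by functions of $m$ and $n$, and conclude finiteness. Your write-up is somewhat more detailed in flagging the edge cases (the infinite eigenvalue and the degenerate blocks $R_0(\la)$, $R_0(\la)^\top$), but the underlying argument is identical to the paper's.
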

\begin{proof}
Each bundle of a fixed size $m\times n$ is determined by:
\begin{itemize}
    \item The number of different eigenvalues, which is less than or equal to $\min\{m,n\}$.
    \item The sizes of the Jordan blocks associated with each eigenvalue (that is, the partial multiplicities), which are bounded by $\min\{m,n\}$.
    \item The minimal indices (left and right), whose sums are bounded by $m$ (left) and $n$ (right).
\end{itemize}
Since there is only a finite number of all these three quantities, for a fixed size $m\times n$, the number of different $m\times n$ bundles is finite.
\end{proof}

\begin{remark}
Lemma \ref{finite-lemma} is in contrast with what happens with orbits, since the number of different orbits of $m\times n$ matrix pencils is infinite (since there are infinitely many different eigenvalues).
\end{remark}

The {\em dimension} of $\orb(L)$, denoted $\dim\orb(L)$, is the dimension of $\orb(L)$ when considered as a differentiable manifold (see, for instance, \cite{de95}).

The following result provides a description of the closure of a bundle as the union of orbits (and not of orbit closures, as in \eqref{bundleclosureunion}), and is a first step to prove Theorem \ref{union_th}, where the closure of a bundle is described as a union of bundles.

\begin{theorem}\label{bundorbitunion_th}
Let $L(\la)$ be an $m\times n$ complex matrix pencil. Then $\overline\bun(L)=\bun(L)\cup\widetilde \bun$, where $\widetilde \bun$ is a union of orbits. Each of these orbits, say $\orb(M)$, satisfies one of the following conditions:
\begin{itemize}
    \item $\dim\orb(M)<\dim\orb(L)$, or
    \item $\orb(M)=\orb(\psi_c(L))$, for some $\psi\in\Psi$ which is not one-to-one on $\Lambda(L)$.
\end{itemize} 
\end{theorem}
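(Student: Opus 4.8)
The plan is to use the characterization of bundle closures already obtained, namely the identity $\overline\bun(L)=\bigcup_{\psi\in\Psi}\overline\orb(\psi_c(L))$ from Remark \ref{bundleclosure_rem}, together with the classical fact (see \cite[p.~60]{humphreys}) that each orbit closure $\overline\orb(N)$ is a finite union of orbits, of which exactly one ($\orb(N)$ itself) is open in $\overline\orb(N)$ and all others have strictly smaller dimension. First I would write
$$
\overline\bun(L)=\bigcup_{\psi\in\Psi}\overline\orb(\psi_c(L))=\bigcup_{\psi\in\Psi}\Big(\orb(\psi_c(L))\cup \mathcal{O}_\psi\Big),
$$
where $\mathcal{O}_\psi$ is the union of the (finitely many) lower-dimensional orbits in $\overline\orb(\psi_c(L))$; thus $\dim\orb(M)<\dim\orb(\psi_c(L))$ for each $\orb(M)$ in $\mathcal{O}_\psi$. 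This exhibits $\overline\bun(L)$ as a (huge, a priori infinite) union of orbits, and it remains to sort these orbits into the two advertised types and to see that $\bun(L)$ itself is exactly the union of the ``top'' orbits $\orb(\psi_c(L))$ with $\psi$ one-to-one on $\Lambda(L)$.

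The key observation is a dichotomy on $\psi\in\Psi$. If $\psi$ is one-to-one on $\Lambda(L)$, then $\psi_c(L)$ has the same KCF as $L$ up to the values of the eigenvalues, so $\orb(\psi_c(L))\subseteq\bun(L)$; conversely every orbit in $\bun(L)$ is of this form, and moreover $\dim\orb(\psi_c(L))=\dim\orb(L)$ in this case (strict equivalence and relabelling eigenvalues by a bijection does not change the dimension of the orbit — this should be recalled or cited from \cite{de95,eek2}). If instead $\psi$ is not one-to-one on $\Lambda(L)$, i.e. it genuinely coalesces at least two eigenvalues of $L$, then $\psi_c(L)\notin\bun(L)$, and I claim $\dim\orb(\psi_c(L))<\dim\orb(L)$: coalescing eigenvalues strictly increases the codimension of the orbit (intuitively, merging Jordan structure imposes strictly more conditions). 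This is precisely where the second bullet of the statement comes from — such orbits $\orb(\psi_c(L))$ are listed explicitly as the second type, while all their proper sub-orbits (the $\mathcal{O}_\psi$ pieces) fall into the first type by transitivity of ``strictly smaller dimension.'' Then I would set
$$
\widetilde\bun:=\overline\bun(L)\setminus\bun(L),
$$
and check that every orbit of $\widetilde\bun$ is either some $\orb(\psi_c(L))$ with $\psi$ not one-to-one on $\Lambda(L)$, or an orbit $\orb(M)$ sitting inside some $\overline\orb(\psi_c(L))$ with $\dim\orb(M)<\dim\orb(\psi_c(L))\le\dim\orb(L)$; in the latter case the first bullet holds, and in the former the second bullet holds (after discarding those $\orb(\psi_c(L))$ that happen to already lie in $\bun(L)$, which cannot occur when $\psi$ is non-injective on $\Lambda(L)$).

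The main obstacle I anticipate is twofold and both parts are ``bookkeeping with dimensions.'' First, one must be careful that $\widetilde\bun$ really decomposes into orbits in a way consistent with the two bullets — in particular that an orbit appearing in $\widetilde\bun$ via the lower-dimensional part $\mathcal{O}_\psi$ of one $\overline\orb(\psi_c(L))$ is not simultaneously a top orbit $\orb(\psi'_c(L))$ of full dimension $\dim\orb(L)$ for some other $\psi'$; this is ruled out exactly because a full-dimensional $\orb(\psi'_c(L))$ forces $\psi'$ injective on $\Lambda(L)$, hence that orbit lies in $\bun(L)$, not in $\widetilde\bun$. Second, and more substantively, one needs the strict inequality $\dim\orb(\psi_c(L))<\dim\orb(L)$ whenever $\psi$ coalesces eigenvalues of $L$; I would prove this either from the explicit dimension formula for pencil orbits in terms of the Weyr/Segre characteristics and minimal indices (available in \cite{de95,eek2}), noting that $\psi_c(L)$ and $L$ have the same singular structure and the same total Jordan data but fewer distinct eigenvalues, which strictly decreases the orbit dimension — or, more cleanly, by invoking Theorem \ref{bundomprev_th} / Theorem \ref{orbitinclusion_th} to get $\psi_c(L)\in\overline\orb$ of a full-dimensional pencil together with $\psi_c(L)\notin\bun(L)$, so that $\overline\orb(\psi_c(L))$ is a proper closed subset of $\overline\bun(L)$ and hence of strictly smaller dimension. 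Everything else (finiteness will be harvested afterwards in Theorem \ref{union_th} via Lemma \ref{finite-lemma}) is routine once this dimension drop is in hand.
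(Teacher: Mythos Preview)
Your overall architecture matches the paper's: start from the identity $\overline\bun(L)=\bigcup_{\psi\in\Psi}\overline\orb(\psi_c(L))$, split each $\overline\orb(\psi_c(L))$ into its top orbit and strictly lower-dimensional orbits, and then sort according to whether $\psi$ is injective on $\Lambda(L)$. However, the central dimension claim you rely on is false, and this is not a detail --- it is exactly the point of the theorem.

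You assert that if $\psi$ genuinely coalesces eigenvalues then $\dim\orb(\psi_c(L))<\dim\orb(L)$, and you repeat this when resolving your ``first obstacle'' (``a full-dimensional $\orb(\psi'_c(L))$ forces $\psi'$ injective on $\Lambda(L)$''). In fact the opposite holds: $\dim\orb(\psi_c(L))=\dim\orb(L)$ for \emph{every} $\psi\in\Psi$. Using the codimension count of \cite{de95}, the singular contributions are unchanged (same minimal indices), and the Jordan contribution $c_{Jor}(L)=\sum_{\mu}\sum_{k\ge1}(2k-1)S_k(\mu,L)$ is also unchanged, because coalescence replaces the Segre characteristics $S(\mu_1,L),\hdots,S(\mu_k,L)$ by their sum $S(\mu,\psi_c(L))=\sum_i S(\mu_i,L)$ (Lemma~\ref{dual_lem}), and $\sum_k(2k-1)\big(\sum_i S_k(\mu_i,L)\big)=\sum_i\sum_k(2k-1)S_k(\mu_i,L)$. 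For a concrete check: $L=J_2(a)\oplus J_1(b)$ has $c_{Jor}=2+1=3$, while $\psi_c(L)=J_3(\mu)$ also has $c_{Jor}=3$. Your intuition that ``merging Jordan structure imposes strictly more conditions'' applies to the \emph{bundle} dimension (one loses a free eigenvalue parameter), not to the orbit dimension. This is precisely why the statement needs a second bullet at all: the orbits $\orb(\psi_c(L))$ with $\psi$ non-injective on $\Lambda(L)$ sit in $\overline\bun(L)\setminus\bun(L)$ with the \emph{same} orbit dimension as $\orb(L)$, so they are not captured by the first bullet.

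Once you replace your strict inequality by the equality $\dim\orb(\psi_c(L))=\dim\orb(L)$, the rest of your argument goes through essentially as in the paper: the proper sub-orbits in each $\overline\orb(\psi_c(L))$ have dimension strictly less than $\dim\orb(\psi_c(L))=\dim\orb(L)$ (first bullet), and the top orbits $\orb(\psi_c(L))$ with $\psi$ non-injective on $\Lambda(L)$ give the second bullet. Your alternative route via ``$\overline\orb(\psi_c(L))$ is a proper closed subset of $\overline\bun(L)$, hence of strictly smaller dimension'' also fails: even granting irreducibility issues, the relevant comparison would be with $\dim\overline\bun(L)$, not with $\dim\orb(L)$, and these differ by the number of distinct eigenvalues.
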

\begin{proof}
Let us first prove the following identity:
\begin{equation}\label{dimorb}
    \dim\orb(\psi_c(L))=\dim\orb(L),\quad \mbox{for all $\psi\in\Psi$}.
\end{equation}
If $\psi\in\Phi$, then the identity in \eqref{dimorb} is an immediate consequence of the codimension count in \cite{de95}. More precisely, the codimension (and, as a consequence, the dimension) of the orbit of a pencil $L(\la)$ depends on the sizes of the regular and singular blocks in KCF($L$), in such a way that for two pencils in the same bundle the (co)dimensions of their orbits are the same (see the quantities 1--5 in the codimension count of \cite[p. 65]{de95}). Also all quantities except the one numbered by 1 (and so-called ``codimension of the Jordan structure") in \cite[p. 65]{de95} are the same for $L(\la)$ and $\psi_c(L)$, even when $\psi$ is not one-to-one, because these two pencils have the same singular blocks, and the size of their regular part (the ``Jordan structure") is the same as well. A closer look to quantity 1 allows us to conclude that it is also the same for $L(\la)$ and $\psi_c(L)$ when $\psi$ is not one-to-one. To see this, let us first recall that this quantity is equal to
$$
c_{Jor}(L)=\sum_{\mu\in\overline{\CC}}\left(S_1(\mu,L)+3S_2(\mu,L)+5S_3(\mu,L)+\cdots\right),
$$
where $S(\mu,L)=(S_1(\mu,L),S_2(\mu,L),S_3(\mu,L),\hdots)$ is the Segre characteristic of $\mu$ in $L(\la)$, namely, the list of sizes of the Jordan blocks in KCF($L$) associated with $\mu$, ordered in non-increasing order. If $\psi^{-1}(\mu)=\{\mu_1,\hdots,\mu_k\}\cup S$, with $S\cap\Lambda(L)=\emptyset$, then, by Definition \ref{coalescence_def}, $W(\mu,\psi_c(L))=\bigcup_{i=1}^k W(\mu_i,L)$, which is equivalent to $S(\mu,\psi_c(L))=\sum_{i=1}^k S(\mu_i,L)$, by Lemma \ref{dual_lem}. Then
$$
\begin{array}{ccl}
c_{Jor}(\psi_c(L))&=&\sum_{\mu\in\overline{\CC}}\left(S_1(\mu,\psi_c(L))+3S_2(\mu,\psi_c(L))+5S_3(\mu,\psi_c(L))+\cdots\right)\\&=&\sum_{\mu\in\overline{\CC}}\left(S_1(\mu,L)+3S_2(\mu,L)+5S_3(\mu,L)+\cdots\right)=c_{Jor}(L).
\end{array}
$$

Now, let us decompose the identity in \eqref{bundleclosureunion} as follows:
\begin{equation}\label{twounion}
\overline\bun(L)=\left(\bigcup_{\varphi\in\Phi}\overline\orb(\varphi(L))\right)\cup\left(\bigcup_{\psi\in\Psi_{\Lambda(L)}}\overline\orb(\psi_c(L))\right),
\end{equation}
where $\Psi_{\Lambda(L)}$ is the set of mappings from $\overline{\CC}$ to itself which are not one-to-one on $\Lambda(L)$. In order to obtain \eqref{twounion} from \eqref{bundleclosureunion}, just note that $\varphi_c(L)=\varphi(L)$ when $\varphi\in\Phi$, so the decomposition in \eqref{twounion} comes from splitting the union in \eqref{bundleclosureunion} into two pieces: one corresponding to the maps from $\overline\CC$ to $\overline{\CC}$ which are one-to-one on $\Lambda(L)$, and the other one corresponding to the remaining maps. Now, since the closure of an orbit is the union of the orbit itself and other orbits with smaller dimension (see the Proposition in \cite[p. 60]{humphreys}), we can write
$$
\overline\bun(L)=\bun(L)\cup \widetilde \bun_1\cup\widetilde \bun_2\cup\widetilde \bun_3,
$$
where 
$$
\bun(L)=\bigcup_{\varphi\in\Phi}\orb(\varphi(L)),\quad\widetilde \bun_1=\bigcup_{\psi\in\Psi_{\Lambda(L)}}\orb(\psi_c(L)),
$$
the set $\widetilde \bun_2$ is a union of orbits with dimension smaller than $\orb(\varphi(L))$, for any $\varphi\in\Phi$ (coming from the left union in the right-hand side of \eqref{twounion}), and $\widetilde \bun_3$ is a union of orbits with dimension smaller than $\orb(\psi_c(L))$, for any $\psi\in\Psi_{\Lambda(L)}$ (coming from the right union in the right-hand side of \eqref{twounion}).

By \eqref{dimorb}, the orbits in $\widetilde B_2$ have dimension strictly smaller than the dimension of $\orb(L)$. Similarly, $\widetilde\bun_1$ is a union of orbits of the form $\orb(\psi_c(L))$, they all having the same dimension as $\orb(L)$, and $\widetilde \bun_3$ is a union of orbits with dimension strictly smaller than the dimension of $\orb(L)$. This concludes the proof.
\end{proof}

The following result provides a description of the closure of the bundle of a pencil as a finite union of bundles. This is the counterpart of the corresponding result for orbits, namely that the closure of an orbit is a union of orbits, which is already known (and has been used in the proof of Theorem \ref{bundorbitunion_th}). Nevertheless, we emphasize the important difference that the closure of the orbit of a pencil may be the union of infinitely many orbits.

\begin{theorem}\label{union_th}
Let $L(\la)$ be an $m\times n$ pencil. Then, there is a finite number of different $m\times n$ pencils, $L_1(\la),\hdots,L_\ell(\la)$ (with, say, $L_1=L$), satisfying $\bun(L_i)\neq\bun(L_j)$, for $i\neq j$, and such that 
\begin{equation}\label{bundleclosure}
    \overline\bun(L)=\bigcup_{i=1}^\ell \bun(L_i).
\end{equation}
Moreover, 
$\overline\bun(L_i)$ is strictly included in $\overline\bun(L)$, for $i\neq1$.
\end{theorem}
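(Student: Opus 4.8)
The plan is to combine the structural result of Theorem~\ref{bundorbitunion_th} with the finiteness statement of Lemma~\ref{finite-lemma} and the transitivity/compatibility of closures expressed in Lemma~\ref{inclusion_lemma}. First I would invoke Theorem~\ref{bundorbitunion_th} to write $\overline\bun(L)=\bun(L)\cup\widetilde\bun$, where $\widetilde\bun$ is a union of orbits, each of which is either an orbit of dimension strictly smaller than $\dim\orb(L)$, or an orbit of the form $\orb(\psi_c(L))$ for some $\psi\in\Psi$ not one-to-one on $\Lambda(L)$ (hence of the same dimension as $\orb(L)$, by \eqref{dimorb}). Each such orbit $\orb(M)$ is contained in its own bundle $\bun(M)$, and $\bun(M)\subseteq\overline\bun(M)\subseteq\overline\bun(L)$ by Lemma~\ref{inclusion_lemma} (applied with the pencil $M$, which lies in $\overline\bun(L)$). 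Therefore $\overline\bun(L)\subseteq\bun(L)\cup\bigcup_{M}\bun(M)\subseteq\overline\bun(L)$, so $\overline\bun(L)$ is a (possibly infinite, a priori) union of bundles, one of which is $\bun(L)=\bun(L_1)$ itself.

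Next I would collapse this union to a finite one. By Lemma~\ref{finite-lemma}, there are only finitely many distinct $m\times n$ bundles in total, so in particular the collection $\{\bun(M):\orb(M)\subseteq\overline\bun(L)\}$ comprises only finitely many distinct sets; call the distinct ones $\bun(L_1),\hdots,\bun(L_\ell)$ with $L_1=L$, chosen so that $\bun(L_i)\neq\bun(L_j)$ for $i\neq j$. This yields the decomposition \eqref{bundleclosure}: the right-hand side is contained in $\overline\bun(L)$ (each $\bun(L_i)\subseteq\overline\bun(L)$ by the previous paragraph, noting every $L_i$ lies in $\overline\bun(L)$ and applying Lemma~\ref{inclusion_lemma}), and it contains $\overline\bun(L)$ since every orbit in the union $\bun(L)\cup\widetilde\bun$ lies in one of the $\bun(L_i)$.

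Finally I would prove the strictness claim: $\overline\bun(L_i)\subsetneq\overline\bun(L)$ for $i\neq 1$. For each $i$, by construction $\orb(L_i)\subseteq\overline\bun(L)$, so Lemma~\ref{inclusion_lemma} gives $\overline\bun(L_i)\subseteq\overline\bun(L)$; it remains to rule out equality. If $\overline\bun(L_i)=\overline\bun(L)$, then $L(\la)\in\overline\bun(L)=\overline\bun(L_i)$, and applying Lemma~\ref{inclusion_lemma} the other way gives $\overline\bun(L)\subseteq\overline\bun(L_i)$ (trivially), but more usefully it tells us $L$ and $L_i$ play symmetric roles. The point to exploit is the dimension/coalescence dichotomy from Theorem~\ref{bundorbitunion_th}: an orbit $\orb(L_i)$ appearing in $\widetilde\bun$ either has $\dim\orb(L_i)<\dim\orb(L)$, or is $\orb(\psi_c(L))$ with $\psi$ not one-to-one on $\Lambda(L)$, in which case $L_i$ has strictly fewer distinct eigenvalues than $L$. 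In the first case, $\bun(L)$ is dense in $\overline\bun(L)$ and consists of orbits of dimension $\dim\orb(L)$, whereas $\bun(L_i)$ is dense in $\overline\bun(L_i)$ with orbits of smaller dimension, so these closures cannot coincide (a dense subset of one would have the wrong orbit dimension for the other). In the second case, every pencil in $\bun(L_i)$ has strictly fewer distinct eigenvalues than every pencil in $\bun(L)$, and since $\bun(L)$ is dense in $\overline\bun(L)$ while $\bun(L_i)$ is dense in $\overline\bun(L_i)$, equality of the closures would force $\bun(L)\cap\overline\bun(L_i)$ and $\bun(L_i)\cap\overline\bun(L)$ to interact inconsistently — more cleanly, $L\in\overline\bun(L_i)$ would force, by Theorem~\ref{bundom_th}, that $\mathrm{KCF}(L)$ is obtained from $\mathrm{KCF}(L_i)$ by coalescing eigenvalues and applying the dominance rules, which cannot increase the number of distinct eigenvalues, contradicting that $L_i$ has fewer distinct eigenvalues than $L$.

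I expect the main obstacle to be the strictness argument in the equal-dimension (coalescence) case: one must be careful that ``fewer distinct eigenvalues'' is genuinely an obstruction to $L\in\overline\bun(L_i)$, and the cleanest route is precisely Theorem~\ref{bundom_th}, which says membership of $L$ in $\overline\bun(L_i)$ is equivalent to $\mathrm{KCF}(L)$ arising from $\mathrm{KCF}(L_i)$ via coalescence plus orbit-dominance — and coalescence only merges eigenvalues (it never splits them) while the dominance relations of Theorem~\ref{orbitinclusion_th} preserve the eigenvalue set, so the number of distinct eigenvalues can only decrease or stay the same along $\overline\bun(\cdot)\subseteq\overline\bun(\cdot)$. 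Hence $L\notin\overline\bun(L_i)$ whenever $L_i$ has strictly fewer distinct eigenvalues, which forces the inclusion $\overline\bun(L_i)\subseteq\overline\bun(L)$ to be strict.
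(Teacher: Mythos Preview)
Your decomposition \eqref{bundleclosure} is correct and matches the paper's argument: Lemma~\ref{inclusion_lemma} shows that $\overline\bun(L)$ is a union of bundles, and Lemma~\ref{finite-lemma} collapses this to a finite union. The genuine gap is in your strictness argument.

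In the coalescence case you assert that ``the dominance relations of Theorem~\ref{orbitinclusion_th} preserve the eigenvalue set,'' so that passing from $\widetilde\psi_c(L_i)$ to a pencil in its orbit closure cannot increase the number of distinct eigenvalues. This is false, even when $h=0$. Take $N(\la)=\left[\begin{smallmatrix}\la&1\\0&0\end{smallmatrix}\right]$ (with KCF $R_1\oplus R_0^\top$, so no eigenvalues) and $M(\la)=\left[\begin{smallmatrix}\la&0\\0&0\end{smallmatrix}\right]$ (with KCF $J_1(0)\oplus R_0\oplus R_0^\top$, so one eigenvalue). Then $\left[\begin{smallmatrix}\la&1/k\\0&0\end{smallmatrix}\right]\in\orb(N)$ converges to $M$, so $M\in\overline\orb(N)$ with $\rank M=\rank N=1$, yet $\#\Lambda(M)=1>0=\#\Lambda(N)$. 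Hence from $L\in\overline\orb(\widetilde\psi_c(L_i))$ alone you cannot conclude $\#\Lambda(L)\le\#\Lambda(L_i)$. (Your dimension case is also not quite an argument as written: density by itself does not force a contradiction; you need to apply Theorem~\ref{bundorbitunion_th} to $\overline\bun(L_i)$ to bound $\dim\orb(L)$.)

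The paper closes both cases at once by applying Theorem~\ref{bundorbitunion_th} \emph{symmetrically}. If $\overline\bun(L_i)=\overline\bun(L)$ with $i\neq1$, then $L\in\overline\bun(L_i)\setminus\bun(L_i)$, so either $\dim\orb(L)<\dim\orb(L_i)$ or $\orb(L)=\orb(\widetilde\psi_c(L_i))$ (an \emph{equality} of orbits, not merely a closure inclusion) with $\widetilde\psi$ not one-to-one on $\Lambda(L_i)$. Combined with the dichotomy for $L_i\in\overline\bun(L)\setminus\bun(L)$ and with \eqref{dimorb}, the dimension alternatives are ruled out and one is left with $\#\Lambda(L_i)<\#\Lambda(L)$ and $\#\Lambda(L)<\#\Lambda(L_i)$ simultaneously. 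Your route via Theorem~\ref{bundom_th} can be salvaged, but only after adding that $r(L)=r(L_i)$ and $\ell(L)=\ell(L_i)$ (since $L_i$ is a coalescence of $L$), which forces the regular parts of $L$ and $\widetilde\psi_c(L_i)$ to have equal size and hence, via the majorizations with $h=0$, equal spectra; that extra step is missing from your proposal.
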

\begin{proof} 
Since, by Lemma \ref{finite-lemma}, the number of different bundles of $m\times n$ matrix pencils is finite, let $\bun(L_1),\hdots,\bun(L_d)$ be these bundles, for some $m\times n$ matrix pencils $L_1(\la),\hdots,L_d(\la)$. Without loss of generality (after a reordering, if necessary), we may assume that $L_1(\la),\hdots,L_\ell(\la)\in\overline\bun(L)$, and $L_{\ell+1}(\la),\hdots,L_d(\la)\not\in\overline\bun(L)$, for some $0\leq\ell\leq d$. By Lemma \ref{inclusion_lemma}, $\bigcup_{i=1}^\ell\bun(L_i)\subseteq\overline\bun(L)$. The reverse inclusion is immediate, since $\widetilde L(\la)\in\overline\bun(L)$ implies that $\bun(\widetilde L)\subseteq\overline\bun(L)$, again by Lemma \ref{inclusion_lemma}, so $\bun(\widetilde L)=\bun(L_i)$, for some $1\leq i\leq \ell$, so $\widetilde L(\la)\in\bigcup_{i=1}^\ell\bun(L_i)$.


 Finally, let us prove that $\overline\bun(L_i)$ is strictly included in $\overline\bun(L)$, for $i\neq1$. First, note that, by Theorem \ref{bundorbitunion_th}, either
\begin{itemize}
    \item[{\rm (a)}] $\dim\orb(L_i)<\dim\orb(L)$, or
    \item[{\rm(b)}] $\orb(L_i)=\orb(\psi_c(L))$, for some $\psi\in\Psi$ which is not one-to-one on $\Lambda(L)$.
\end{itemize}
In case (a) we clearly have
\begin{equation}\label{dimineq}
\dim\orb(L_i)\leq\dim\orb(L),
\end{equation}
and \eqref{dimorb} implies that \eqref{dimineq} also holds in case (b). 

Assume, by contradiction, that $\overline{\bun}(L_i)=\overline{\bun}(L)$, for some $i\neq1$. This implies, in particular, that $L(\la)\in\overline\bun(L_i)$. Thus, Theorem \ref{bundorbitunion_th} applied to $\overline\bun(L_i)$ implies that either
\begin{itemize}
    \item[{\rm (c)}] $\dim\orb(L)<\dim\orb(L_i)$, or
    \item[{\rm (d)}] $\orb(L)=\orb(\widetilde\psi_c(L_i))$, for some $\widetilde\psi\in\Psi$ which is not one-to-one on $\Lambda(L_i)$.
\end{itemize}
Condition (c) does not hold because of \eqref{dimineq}, so (d) must hold. But (d) combined with \eqref{dimorb} implies 
$\dim\orb(L)=\dim\orb(\widetilde\psi_c(L_i))=\dim\orb(L_i)$, which implies that condition (a) does not hold. So, (b) and (d) hold simultaneously. But (b) implies that the number of distinct eigenvalues of $L_i$ is strictly less than the number of distinct eigenvalues of $L$, while (d) implies the opposite, which is a contradiction. Thus, we conclude that $\overline\bun(L_i)$ is strictly included in $\overline\bun(L)$.
\end{proof}

Now, we are in the position to prove the main result in this section.

\begin{theorem}\label{open_th}
Let $L(\la)$ be an $m\times n$ matrix pencil. Then $\bun(L)$ is an open set in its closure.
\end{theorem}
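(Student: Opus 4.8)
The plan is to deduce this immediately from Theorem~\ref{union_th}. By that theorem, we may write
$$
\overline\bun(L)=\bun(L)\cup\bigcup_{i=2}^\ell\bun(L_i),
$$
where $\overline\bun(L_i)\subsetneq\overline\bun(L)$ for every $i\neq1$. The key observation is that the complement of $\bun(L)$ inside $\overline\bun(L)$ is exactly $\bigcup_{i=2}^\ell\bun(L_i)$, and this set is \emph{closed} in $\overline\bun(L)$. Indeed, I would argue that
$$
\bigcup_{i=2}^\ell\bun(L_i)=\bigcup_{i=2}^\ell\overline\bun(L_i),
$$
the nontrivial inclusion being ``$\supseteq$'': if $M(\la)\in\overline\bun(L_i)$ for some $i\geq2$, then $\overline\bun(M)\subseteq\overline\bun(L_i)\subseteq\overline\bun(L)$ by Lemma~\ref{inclusion_lemma} applied twice (the second time using $L_i\in\overline\bun(L)$, hence $\overline\bun(L_i)\subseteq\overline\bun(L)$, so $M\in\overline\bun(L)$); in particular $M(\la)\in\overline\bun(L)$, so by Theorem~\ref{union_th} its bundle $\bun(M)$ must coincide with $\bun(L_j)$ for some $1\leq j\leq\ell$. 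If $j=1$ then $\bun(M)=\bun(L)$, so $L(\la)\in\bun(M)\subseteq\overline\bun(L_i)$, which forces $\overline\bun(L)\subseteq\overline\bun(L_i)$, contradicting the strict inclusion $\overline\bun(L_i)\subsetneq\overline\bun(L)$. Hence $j\geq2$ and $M(\la)\in\bun(L_j)\subseteq\bigcup_{i=2}^\ell\bun(L_i)$.

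Granting this, $\bigcup_{i=2}^\ell\bun(L_i)=\bigcup_{i=2}^\ell\overline\bun(L_i)$ is a finite union of closed subsets of $\CC^{2mn}$, hence closed in $\CC^{2mn}$ and a fortiori closed in the subspace $\overline\bun(L)$. Therefore its complement in $\overline\bun(L)$, which is precisely $\bun(L)$ (because the $\bun(L_i)$ for $i\geq2$ are pairwise distinct from $\bun(L)$, so no point of $\bun(L)$ lies in the union), is open in $\overline\bun(L)$. This is exactly the assertion that $\bun(L)$ is open in its closure.

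I expect the only delicate point to be the set-theoretic bookkeeping in establishing $\bigcup_{i=2}^\ell\bun(L_i)=\bigcup_{i=2}^\ell\overline\bun(L_i)$, i.e. making sure the strict-inclusion conclusion of Theorem~\ref{union_th} is used correctly to rule out the case $\bun(M)=\bun(L)$, and checking that $\bun(L)$ is genuinely disjoint from each $\bun(L_i)$ with $i\geq2$ (which holds since the $L_1,\dots,L_\ell$ were chosen to have pairwise distinct bundles, and distinct bundles are disjoint as sets of pencils). Everything else is a routine application of the fact that a finite union of closed sets is closed, together with Lemma~\ref{inclusion_lemma} and Theorem~\ref{union_th}. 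No separate topological input beyond these is needed.
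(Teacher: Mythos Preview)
Your proof is correct and follows essentially the same approach as the paper's: both rely on the decomposition of $\overline\bun(L)$ from Theorem~\ref{union_th}, Lemma~\ref{inclusion_lemma}, and the same core contradiction (a point of $\bun(L)$ lying in some $\overline\bun(L_i)$ with $i\neq1$ would force $\overline\bun(L)\subseteq\overline\bun(L_i)$, violating the strict inclusion). The only cosmetic difference is that the paper argues by contradiction via a convergent sequence and the pigeonhole principle, whereas you argue directly that the complement $\bigcup_{i\geq2}\bun(L_i)=\bigcup_{i\geq2}\overline\bun(L_i)$ is closed; the mathematical content is the same.
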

\begin{proof}
Let $M(\la)\in\bun(L)$. We want to prove that there is an open neighborhood of $M(\la)$, say $U$, such that $U\cap \overline\bun(L)\subseteq\bun(L)$. Let us proceed by contradiction. 
Then, there is a sequence, $\{M_k(\la)\}_{k\in\mathbb N}$, of matrix pencils that converges to $M(\la)$, with $M_k(\la)\in\overline\bun(L)$ but $M_k(\la)\not\in\bun(L)$.
By Theorem \ref{union_th}, $\overline\bun(L)=\bigcup_{i=1}^\ell\bun(L_i)$, with $L_1(\la)=L(\la)$. Since this is a finite union and $M_k(\la)\not\in\bun(L)$, there is an infinite number of terms in the sequence $\{M_k(\la)\}_{k\in\mathbb N}$
belonging to some $\bun(L_i)$, with $i\neq1$. Without loss of generality, let us assume that $i=2$ and, considering a subsequence if necessary, assume that $M_k(\la)\in\bun(L_2)$, for all $k\in\mathbb N$. But $M_k(\la)\rightarrow M(\la)$ implies that $M(\la)\in\overline\bun(L_2)$. Note that the hypothesis $M(\la)\in\bun(L)$ implies $\bun(M)=\bun(L)$, so Lemma \ref{inclusion_lemma} in turn implies that $\overline\bun(L)\subseteq \overline\bun(L_2)$, which in turn implies $\overline\bun(L)= \overline\bun(L_2)$, and this is a contradiction with the last claim in Theorem \ref{union_th}. 
\end{proof}

\begin{remark}\label{pervouchine_remark}
We want to highlight that Theorem 7.5 in \cite{pervouchine} is false. This theorem states the following, with the notation of this paper:
\begin{equation}\label{bundunion}
    \overline\bun(L)=\bigcup_{\psi\in\Psi}\overline\orb(\psi(L)).
\end{equation}
Comparing \eqref{bundunion} with \eqref{bundleclosureunion}, we see that the difference is that in \eqref{bundunion} the function $\psi$ is used instead of the right one, namely $\psi_c(L)$. In other words, \eqref{bundunion} is missing all pencils that are obtained by coalescing eigenvalues of $L(\la)$.

A counterexample of \eqref{bundunion} is the following. Let
$$
L(\la):=\left[\begin{array}{c|cc}
     \la-3&0&0  \\\hline
     0&\la-2&1\\0&0&\la-2 
\end{array}\right]=\diag(\la +J_1(3),\la I_2+J_2(2)),
$$
and
$$
L_k(\la):=\begin{bmatrix}\la-(2+1/k)&1&0\\0&\la-2&1\\0&0&\la-2\end{bmatrix}.
$$
Note that
$$
\widehat L(\la):=\lim_{k\rightarrow\infty}L_k(\la)=\begin{bmatrix}\la-2&1&0\\0&\la-2&1\\0&0&\la-2\end{bmatrix}=\la I_3+J_3(2).
$$
Then 
$$
L_k(\la)\in\orb\left(\left[\begin{array}{c|cc}\la-(2+1/k)&0&0\\\hline0&\la-2&1\\0&0&\la-2\end{array}\right]\right)\subseteq \bun(L),\quad\mbox{for all $k\in\mathbb N$,}
$$
which implies that 
\begin{equation}\label{in}
\widehat L(\la)\in\overline\bun(L).
\end{equation}

We are going to prove that 
\begin{equation}\label{notin}
\widehat L(\la)\not\in \bigcup_{\psi\in\Psi}\overline\orb(\psi(L)).
\end{equation}
Note that equations \eqref{in} and \eqref{notin} contradict \eqref{bundunion}.

In order to see \eqref{notin}, note first that
$$
\orb(\psi(L))=\orb\left(\left[\begin{array}{c|cc}
     \la-\psi(3)&0&0  \\\hline 0&\la-\psi(2)&1\\0&0&\la-\psi(2)
\end{array}\right]\right)=\orb\left(\left[\begin{array}{c|cc}
    \la- a&0&0  \\\hline 0&\la-b&1\\0&0&\la-b
\end{array}\right]\right),
$$
with $a,b\in\overline\CC$ arbitrary (if $a$ or $b$ is $\infty$ then the pencil in the right-hand side should be replaced accordingly by either $\diag(\la-a,\la N_2+I_2)$, $\diag(1,\la I_2+J_2(b)),$ or $\diag(1,\la N_2+I_2)$).

If $a=b$, then 
$$
\widehat L(\la)\not\in \overline\orb(\left(\left[\begin{array}{c|cc}
     \la-a&0&0  \\\hline 0&\la-a&1\\0&0&\la-a
\end{array}\right]\right)
$$
(with, again, performing the appropriate changes if either $a=b=\infty$). To see this, note that, in order for $\widehat L(\la)$ to belong to the closure of the orbit in the right-hand side of the previous equation, it should be $a=2$. However, the closure relationships of matrix orbits in Theorem \ref{orbitinclusion_th} guarantee that $\widehat L$ does not belong to this closure (the majorization $(2,1,0)\prec(1,1,1)$ does not hold).

If $a\neq b$, then 
$$
\widehat L(\la)\not\in \overline\orb(\left(\left[\begin{array}{c|cc}
     \la-a&0&0  \\\hline 0&\la-b&1\\0&0&\la-b
\end{array}\right]\right)
$$
because the eigenvalues of regular pencils are continuous functions of the entries of their coefficients, thus $\widehat L(\la)$, that has only one eigenvalue equal to $2$ (with multiplicity $3$) cannot be the limit of a sequence of pencils with two different eigenvalues equal to $a$ and $b$ (fixed).
\end{remark}

\subsection{Bundles of matrices under similarity}\label{matrixbun_sec}

We can also prove that bundles of matrices under similarity are open in their closure using similar arguments to the ones for bundles of matrix pencils above in this section, combined with results from \cite{eek2}. More precisely, for a matrix $A\in\CC^{n\times n}$, we define
$$
\begin{array}{ccll}
     \orb(A)&:=&\{PAP^{-1}\,:\ P\in\mbox{\rm GL}_n(\CC)\} &\mbox{(orbit of $A$),}\\
     \bun(A)&:=&\displaystyle\bigcup_{\varphi\in\Phi}\orb(\varphi(A)) &\mbox{(bundle of $A$),}
\end{array}
$$
where now $\Phi$ is the set of one-to-one maps from $\CC$ to itself (since the infinite eigenvalue does not apply for matrices) and $\varphi(A)$ is any matrix similar to the one obtained from the JCF of $A$ after replacing the Jordan blocks with eigenvalue $\mu \in \mathbb{C}$ by Jordan blocks of the same size with eigenvalue $\varphi(\mu)$ for any eigenvalue $\mu$ of $A$. Similarly, if $\Psi$ denotes the set of maps from $\CC$ to itself (not necessarily one-to-one), then we can introduce the following notion of coalescence for matrices:

\begin{definition}\label{coalescence-matrices_def} {\rm (Coalescence of eigenvalues of matrices).}
Let $A\in\CC^{n\times n}$ be a matrix  with different eigenvalues $\mu_1,\hdots,\mu_s$, and let $\psi\in\Psi$. Then, $\psi_c(A)$ is any $n\times n$ matrix satisfying the following property:
\begin{itemize}
    \item $W(\mu,\psi_c(A))=\bigcup_{\mu_i\in\psi^{-1}(\mu)}W(\mu_i,A)$, for all $\mu\in\CC$.
\end{itemize}
We say that the eigenvalues $\mu_{i_1},\hdots,\mu_{i_d}$ of $A$ have {\em coalesced} to the eigenvalue $\mu$ in $\psi_c(A)$ if $\psi^{-1}(\mu) = \{\mu_{i_1},\hdots,\mu_{i_d}\}\cup S$, with $S\cap \Lambda (A) = \emptyset$.
\end{definition}
Observe that all matrices $\psi_c(A)$ in Definition \ref{coalescence-matrices_def} are similar to each other, since they all have the same Jordan canonical form.

With these definitions, the characterization of the bundle closure inclusion for matrices under similarity, provided in \cite[Th. 2.6]{eek2}, can be stated as follows:

\begin{theorem}\label{bundom-matrices_th}{\rm \cite[Th. 2.6]{eek2}} {\rm (Characterization of the inclusion between bundle closures of matrices).}
Let $A_1,A_2\in\CC^{n\times n}$. Then $\overline\bun(A_2)\subseteq\overline\bun(A_1)$ if and only if $A_2\in\overline\orb(\psi_c(A_1))$, for some map $\psi:\CC\rightarrow\CC$.
\end{theorem}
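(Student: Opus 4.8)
The plan is to deduce the statement from its pencil counterpart, Theorem~\ref{bundom_th}, via the classical embedding of matrices into pencils. To $A\in\CC^{n\times n}$ I associate the pencil $\widehat A(\la):=\la I_n-A$. Its KCF consists only of Jordan blocks attached to finite eigenvalues (no singular blocks, no infinite eigenvalue), it encodes exactly the Jordan structure of $A$, and one has $\rank\widehat A=n$, $r(\widehat A)=\ell(\widehat A)=(0,0,\hdots)$, $W(\mu,\widehat A)=W(\mu,A)$ for $\mu\in\CC$, and $W(\infty,\widehat A)=(0,0,\hdots)$. The map $\iota\colon A\mapsto\widehat A$ is a homeomorphism of $\CC^{n\times n}$ onto the closed affine subspace $\{I_n\}\times\CC^{n\times n}$ of pencil space, so it is compatible with taking closures relative to its image.

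The proof rests on two transfer facts. First, \emph{orbit transfer}: $A'\in\overline\orb(A)$ in $\CC^{n\times n}$ if and only if $\widehat{A'}(\la)\in\overline\orb(\widehat A)$ in pencil space. The forward implication follows by conjugating a sequence $P_kAP_k^{-1}\to A'$. For the converse, if $\la B_k+C_k\to\la I_n-A'$ with $\la B_k+C_k=P_k(\la I_n-A)Q_k$, then $B_k=P_kQ_k\to I_n$ is invertible for large $k$ and $B_k^{-1}(\la B_k+C_k)=\la I_n-Q_k^{-1}AQ_k\to\la I_n-A'$, so the similar matrices $Q_k^{-1}AQ_k\in\orb(A)$ converge to $A'$. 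Second, \emph{coalescence transfer}: for $\psi\colon\CC\to\CC$ (extended by $\psi(\infty)=\infty$), the pencil $\psi_c(\widehat A)$ of Definition~\ref{coalescence_def} is strictly equivalent to $\widehat{\psi_c(A)}(\la)=\la I_n-\psi_c(A)$, since both are regular with empty singular part, trivial infinite part, and the same coalesced finite Weyr data; and, specializing the explicit limiting construction in the ``only if'' part of the proof of Theorem~\ref{bundomprev_th} to $L=\widehat A$ (no singular blocks, no infinite eigenvalue), every pencil $L_k(\la)$ produced there has identity leading coefficient, hence equals $\la I_n-A_k$ with $A_k\in\bun(A)$, and the sequence converges to $\widehat{\psi_c(A)}$; therefore $\psi_c(A)\in\overline\bun(A)$ in $\CC^{n\times n}$ for every $\psi\colon\CC\to\CC$, and consequently $\overline\orb(\psi_c(A))\subseteq\overline\bun(A)$ as well (matrix bundle closures are invariant under similarity).

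Now I would assemble everything. From the two transfer facts one first gets: $A_2\in\overline\bun(A_1)$ in $\CC^{n\times n}$ if and only if $\widehat{A_2}(\la)\in\overline\bun(\widehat A_1)$ in pencil space. Indeed, $\iota$ maps $\bun(A_1)$ into $\bun(\widehat A_1)$ continuously, giving the forward direction; conversely, if $\widehat{A_2}\in\overline\bun(\widehat A_1)$, Theorem~\ref{bundomprev_th} yields $\psi\colon\overline\CC\to\overline\CC$ with $\widehat{A_2}\in\overline\orb(\psi_c(\widehat A_1))$, and since here $h=\rank\psi_c(\widehat A_1)-\rank\widehat{A_2}=n-n=0$ while $W(\infty,\widehat{A_2})=(0,0,\hdots)$, Theorem~\ref{orbitinclusion_th} forces $W(\infty,\psi_c(\widehat A_1))=(0,0,\hdots)$; thus $\psi$ fixes $\infty$ and may be taken with $\psi(\CC)\subseteq\CC$, so $\psi_c(\widehat A_1)=\widehat{\psi_c(A_1)}$ by the second fact, and the first fact gives $A_2\in\overline\orb(\psi_c(A_1))\subseteq\overline\bun(A_1)$. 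Combining this with Lemma~\ref{inclusion_lemma} (transferred the same way to matrices) one gets $A_2\in\overline\bun(A_1)\iff\overline\bun(A_2)\subseteq\overline\bun(A_1)$, and the chain of equivalences then reads $\overline\bun(A_2)\subseteq\overline\bun(A_1)$ iff $A_2\in\overline\orb(\psi_c(A_1))$ for some $\psi\colon\CC\to\CC$, which is the claim. (This is also \cite[Th.~2.6]{eek2}, and the argument above is exactly ``the same arguments as for pencils combined with results from \cite{eek2}'' alluded to earlier.)

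The step I expect to be the main obstacle is the converse of the bundle transfer, i.e.\ ensuring that coalescence carried out in the larger ``pencil world'' does not leave the ``matrix world'': one must rule out that an optimal coalescence map sends a finite eigenvalue of $A_1$ to $\infty$ — handled by the $h=0$ rank bookkeeping — and one must verify that the explicit witnessing construction of Theorem~\ref{bundomprev_th}, applied to $\la I_n-A_1$, stays among pencils with identity leading coefficient, so that it certifies membership in $\overline\bun(A_1)$ inside $\CC^{n\times n}$ and not merely inside pencil space. Everything else is a routine transcription of the pencil arguments, with all $h$'s equal to $0$ and with all singular and infinite parts absent throughout.
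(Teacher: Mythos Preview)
Your argument is correct. The embedding $A\mapsto\widehat A(\la)=\la I_n-A$ is a homeomorphism onto a closed affine slice of pencil space, the orbit and bundle transfers you state are valid, and the crucial point you flag --- that the coalescence map $\psi$ produced by Theorem~\ref{bundomprev_th} applied to $\widehat{A_1}$ cannot send a finite eigenvalue to $\infty$ --- is handled correctly by the $h=0$ argument via condition~(iii) of Theorem~\ref{orbitinclusion_th}. The observation that the explicit $L_k(\la)$ in the ``only if'' part of the proof of Theorem~\ref{bundomprev_th} have identity leading coefficient (when $L=\widehat A$ and all target eigenvalues are finite) is also correct, and it gives you $\psi_c(A)\in\overline\bun(A)$ inside $\CC^{n\times n}$.

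However, your route differs from the paper's. The paper does not give an independent proof of Theorem~\ref{bundom-matrices_th}: it quotes the result from \cite[Th.~2.6]{eek2} and remarks that the one ingredient not contained there --- the matrix analogue of Lemma~\ref{inclusion_lemma} --- is obtained by \emph{rerunning the pencil arguments directly for matrices}, i.e.\ replacing the KCF by the JCF, dropping the singular and infinite parts, and setting $h=0$ throughout in Theorem~\ref{orbitinclusion_th}. In short, the paper's implicit approach is ``redo the proofs of Theorems~\ref{coalescence_th}--\ref{bundom_th} in the simpler matrix setting,'' whereas yours is ``embed matrices into pencils and pull the pencil theorems back.'' Your method avoids repeating the technical work (Lemmas~\ref{equivalence_lemma}--\ref{rank_lemma}, Theorem~\ref{coalescence_th}) at the cost of the transfer bookkeeping; the paper's method is more direct once one accepts that every step of the pencil proof specializes trivially because $r,\ell,W(\infty,\cdot)$ and $h$ all vanish.
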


The statement of Theorem \ref{bundom-matrices_th} is implicitly assuming that $A_2\in\overline\bun(A_1)$ implies $\overline{\bun}(A_2)\subseteq\overline\bun(A_1)$ (the counterpart of Lemma \ref{inclusion_lemma} for matrices under similarity), but this can be proved using similar arguments to the ones in the proof of Lemma \ref{inclusion_lemma}, replacing matrix pencils by matrices, the KCF by the JCF, and Theorem \ref{orbitinclusion_th} by the corresponding one for orbits of matrices under similarity, which only includes condition (iii) with $h=0$. Also, using similar arguments to the ones in the proof of Theorem \ref{union_th}, including the fact that the number of different bundles of $n\times n$ matrices is finite (the counterpart of Lemma \ref{finite-lemma} for bundles of matrices under similarity), we can prove the counterpart of Theorem \ref{union_th} for bundles of matrices
\begin{theorem}\label{union-matrices_th}
Let $A\in\CC^{n\times n}$. Then, there is a finite number of different matrices, $A_1,\hdots,A_\ell\in\CC^{n\times n}$ (with, say, $A_1=A$), such that 
$$
    \overline\bun(A)=\bigcup_{i=1}^\ell \bun(A_i).
$$
Moreover, $\overline\bun(A_i)$ is strictly included in $\overline\bun(A)$, for $i\neq1$.
\end{theorem}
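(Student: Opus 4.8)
The plan is to transcribe, almost verbatim, the proof of Theorem~\ref{union_th}, systematically replacing matrix pencils by matrices, the KCF by the JCF, strict equivalence by similarity, and Theorem~\ref{orbitinclusion_th} by its similarity analogue (which retains only condition~(iii), now with $h=0$). Before running that argument I would isolate the three ingredients it uses, each of which has a matrix counterpart. First, the finiteness of the set of bundles of $n\times n$ matrices under similarity --- the counterpart of Lemma~\ref{finite-lemma} --- which holds because such a bundle is determined by the number of distinct eigenvalues (at most $n$) together with the multiset of Jordan block sizes attached to each of them (each bounded by $n$). Second, the matrix version of Lemma~\ref{inclusion_lemma}, namely $A_2\in\overline\bun(A_1)\Rightarrow\overline\bun(A_2)\subseteq\overline\bun(A_1)$, obtained by repeating the computation of that proof with $\psi_c$, $\varphi\in\Phi$, and $(\varphi\circ\psi)_c$ (this is exactly what makes Theorem~\ref{bundom-matrices_th} of \cite{eek2} a genuine characterization). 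Third, the matrix version of Theorem~\ref{bundorbitunion_th}: $\overline\bun(A)=\bun(A)\cup\widetilde\bun$, where $\widetilde\bun$ is a union of orbits each of which either has dimension strictly smaller than $\dim\orb(A)$ or equals $\orb(\psi_c(A))$ for some $\psi\in\Psi$ that is not one-to-one on $\Lambda(A)$.

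The only step of the third ingredient that is not purely formal is the identity $\dim\orb(\psi_c(A))=\dim\orb(A)$ for every $\psi\in\Psi$, the analogue of \eqref{dimorb}. I would prove it via the codimension formula for similarity orbits, $\operatorname{codim}\orb(A)=\sum_{\mu\in\CC}\bigl(S_1(\mu,A)+3S_2(\mu,A)+5S_3(\mu,A)+\cdots\bigr)$, where $S(\mu,A)$ denotes the Segre characteristic of $\mu$ in $A$: if $\psi^{-1}(\mu)=\{\mu_1,\hdots,\mu_k\}\cup S$ with $S\cap\Lambda(A)=\emptyset$, then $W(\mu,\psi_c(A))=\bigcup_{i=1}^kW(\mu_i,A)$ by Definition~\ref{coalescence-matrices_def}, hence $S(\mu,\psi_c(A))=\sum_{i=1}^kS(\mu_i,A)$ by Lemma~\ref{dual_lem}, and therefore the double sum defining $\operatorname{codim}\orb(\psi_c(A))$ equals the one defining $\operatorname{codim}\orb(A)$; since the ambient space $\CC^{n\times n}$ is the same in both cases, the orbit dimensions agree. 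This is precisely the matrix analogue of the $c_{Jor}$ computation carried out inside the proof of Theorem~\ref{bundorbitunion_th}.

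With these three facts available, the conclusion follows as in Theorem~\ref{union_th}. Let $\bun(A_1),\hdots,\bun(A_d)$ be the finitely many bundles of $n\times n$ matrices; after reordering, assume $A_1=A$, that $A_1,\hdots,A_\ell\in\overline\bun(A)$, and that $A_{\ell+1},\hdots,A_d\notin\overline\bun(A)$. The matrix version of Lemma~\ref{inclusion_lemma} gives $\bigcup_{i=1}^\ell\bun(A_i)\subseteq\overline\bun(A)$, and the reverse inclusion holds because any $\widetilde A\in\overline\bun(A)$ satisfies $\bun(\widetilde A)\subseteq\overline\bun(A)$, so $\bun(\widetilde A)=\bun(A_i)$ for some $i\le\ell$, whence $\widetilde A\in\bigcup_{i=1}^\ell\bun(A_i)$. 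For the strict inclusion when $i\neq1$: by the matrix version of Theorem~\ref{bundorbitunion_th}, either $\dim\orb(A_i)<\dim\orb(A)$ or $\orb(A_i)=\orb(\psi_c(A))$ with $\psi$ not one-to-one on $\Lambda(A)$; in both cases $\dim\orb(A_i)\le\dim\orb(A)$ by the previous paragraph. Assuming for contradiction that $\overline\bun(A_i)=\overline\bun(A)$, we get $A\in\overline\bun(A_i)$, so applying the same structural result to $\overline\bun(A_i)$ forces $\orb(A)=\orb(\widetilde\psi_c(A_i))$ for some $\widetilde\psi$ not one-to-one on $\Lambda(A_i)$ (the alternative $\dim\orb(A)<\dim\orb(A_i)$ being excluded); this yields $\dim\orb(A)=\dim\orb(A_i)$, hence the first alternative fails for $A_i$ and $\orb(A_i)=\orb(\psi_c(A))$ with $\psi$ not one-to-one on $\Lambda(A)$. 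But then $A_i$ has strictly fewer distinct eigenvalues than $A$, while simultaneously $A$ has strictly fewer distinct eigenvalues than $A_i$ --- a contradiction.

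I expect the main obstacle to be organisational rather than mathematical: one must first actually state and prove the similarity versions of Lemma~\ref{inclusion_lemma} and Theorem~\ref{bundorbitunion_th} (the excerpt only sketches the former and does not state the latter), since the whole argument is built modularly on top of them. The one genuinely new computation, the dimension invariance $\dim\orb(\psi_c(A))=\dim\orb(A)$, is disposed of by the codimension formula together with Lemma~\ref{dual_lem}, exactly as in the pencil case, and presents no real difficulty.
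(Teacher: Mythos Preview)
Your proposal is correct and follows exactly the route the paper indicates: the paper does not give a detailed proof of Theorem~\ref{union-matrices_th} but simply says to use ``similar arguments to the ones in the proof of Theorem~\ref{union_th}'', together with the finiteness of bundles and the matrix analogue of Lemma~\ref{inclusion_lemma}. You have faithfully carried out that transcription, correctly identifying that the matrix version of Theorem~\ref{bundorbitunion_th} (and in particular the identity $\dim\orb(\psi_c(A))=\dim\orb(A)$, handled via the Arnold codimension formula and Lemma~\ref{dual_lem}) is the one ingredient that requires a short argument rather than a pure notational substitution.
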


Finally, from Theorem \ref{union-matrices_th} and using similar arguments to the ones in the proof of Theorem \ref{open_th}, we can prove that bundles of matrices under similarity are open in their closure:

\begin{theorem}\label{open-matrices_th}
Let $A\in\CC^{n\times n}$. Then $\bun(A)$ is an open set in its closure.
\end{theorem}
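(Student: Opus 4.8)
The plan is to transcribe the proof of Theorem \ref{open_th} to the similarity setting, using Theorem \ref{union-matrices_th} as the key input. First I would fix $M\in\bun(A)$ and argue by contradiction: if $\bun(A)$ were not open in $\overline\bun(A)$, then no open neighborhood of $M$ would meet $\overline\bun(A)$ only inside $\bun(A)$, so there would exist a sequence $\{M_k\}_{k\in\mathbb N}$ of $n\times n$ matrices with $M_k\to M$, $M_k\in\overline\bun(A)$, and $M_k\notin\bun(A)$.

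Next I would invoke Theorem \ref{union-matrices_th}, which gives $\overline\bun(A)=\bigcup_{i=1}^\ell\bun(A_i)$ with $A_1=A$. Since this is a finite union and no $M_k$ lies in $\bun(A)=\bun(A_1)$, infinitely many terms of the sequence belong to a single $\bun(A_i)$ with $i\neq1$; after reindexing we may take $i=2$ and, passing to a subsequence, assume $M_k\in\bun(A_2)$ for all $k$. Then $M_k\to M$ forces $M\in\overline\bun(A_2)$.

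Now I would use the similarity counterpart of Lemma \ref{inclusion_lemma} (which, as noted in the excerpt, follows from the same argument with the KCF replaced by the JCF and Theorem \ref{orbitinclusion_th} replaced by the orbit-closure characterization for matrices under similarity, i.e.\ condition (iii) with $h=0$): since $M\in\bun(A)$ we have $\bun(M)=\bun(A)$, so from $M\in\overline\bun(A_2)$ we get $\overline\bun(A)=\overline\bun(M)\subseteq\overline\bun(A_2)$. Combined with $\bun(A_2)\subseteq\overline\bun(A)$, hence $\overline\bun(A_2)\subseteq\overline\bun(A)$, this yields $\overline\bun(A_2)=\overline\bun(A)$, contradicting the last assertion of Theorem \ref{union-matrices_th} that $\overline\bun(A_2)$ is strictly contained in $\overline\bun(A)$. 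This contradiction proves that $\bun(A)$ is open in its closure.

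I do not expect any genuine obstacle in this argument itself; the real work is packaged inside Theorem \ref{union-matrices_th} and the matrix version of Lemma \ref{inclusion_lemma}, which in turn rest on two facts: the number of $n\times n$ bundles under similarity is finite, and the auxiliary bundle-closure inclusions produced in the decomposition of $\overline\bun(A)$ are strict. I would stress that this finite-union structure of bundle closures is exactly what makes the contradiction go through, and is precisely where the situation for bundles differs from that for orbits, since the closure of an orbit can be an infinite union of orbits.
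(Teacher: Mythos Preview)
Your proposal is correct and is exactly the approach the paper indicates: it states that Theorem~\ref{open-matrices_th} follows from Theorem~\ref{union-matrices_th} by the same argument used to prove Theorem~\ref{open_th}, and your write-up faithfully carries out that transcription, including the appeal to the similarity analogue of Lemma~\ref{inclusion_lemma}.
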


\subsection{Bundles of matrix polynomials of higher degree}\label{polybun_sec}

From the previous developments, we can also conclude that bundles of matrix polynomials are open in their closure.

A matrix polynomial of degree $d$ in the variable $\la$, say $P(\la)$, is of the form 
$
P(\la)=\sum_{i=0}^d\la^{i}A_i,
$
with $A_i\in\CC^{m\times n}$, for $0\leq i\leq d$, and $A_d\neq0$. If we allow $A_d$ to be zero, then we say that $P(\la)$ has {\em grade} $d$. Matrix pencils are particular cases of matrix polynomials (namely, when $d=1$). 

We denote by ${\rm POL}_d^{m\times n}$ the set of complex matrix polynomials of size $m\times n$ and grade $d$.

The {\em orbit} of a matrix polynomial $P(\la)$ is the subset of matrix polynomials in ${\rm POL}_d^{m\times n}$ having exactly the same complete spectral information as $P(\la)$ (see \cite[p. 217]{dd17}), i. e., the same eigenvalues with the same partial multiplicities, and the same left and right minimal indices. The definitions of these concepts can be found, for instance, in \cite{dd17} and the references therein. Similarly, the {\em bundle} of $P(\la)$, denoted by $\bun(P)$, is the subset of matrix polynomials in ${\rm POL}_d^{m\times n}$ having the same spectral information as $P(\la)$, ``except that the values of the distinct eigenvalues are unspecified as long as they remain distinct" (this sentence is taken from \cite{ddd-polys}, for the case of symmetric matrix polynomials).  We want to emphasize that, for matrix polynomials of higher grade, the action of strict equivalence is not appropriate for defining orbits in this context, since two matrix polynomials of grade larger than $1$ can have the same spectral information without being strictly equivalent.

If ${\cal C}_P(\la)$ denotes the first Frobenius companion linearization of $P(\la)$ (see, for instance, \cite{dd17}), then $Q(\la)\in{\rm POL}_{d}^{m\times n}$ belongs to the bundle of $P(\la)$ if and only if ${\cal C}_Q(\la)\in\bun({\cal C}_P)$. This is because $Q(\la)$ has the same complete spectral information as $P(\la)$ if and only if ${\cal C}_P(\la)$ has the same KCF as ${\cal C}_Q(\la)$. Moreover, there is a homeomorphism,
\begin{equation}\label{homeo}
\begin{array}{cccc}
     f:&{\rm POL}_d^{m\times n}&\rightarrow&{\rm GSYL}_d^{m\times n}  \\
     &P(\la)&\mapsto&{\cal C}_P(\la), 
\end{array}
\end{equation}
where ${\rm GSYL}_d^{m\times n}$ denotes the set of complex matrix pencils of the form ${\cal C}_P(\la)$, with $P(\la)\in{\rm POL}_d^{m\times n}$ (the so-called {\em generalized Sylvester space} in \cite{dd17}). The fact that \eqref{homeo} is a homeomorphism is immediate because it is an isometry (for instance, with the following distance for matrix pencils and matrix polynomials, defined in terms of the Frobenius norms of the coefficients, $\rho(\sum_{i=0}^d\la^{i}A_i,\sum_{i=0}^d\la^{i}A_i'):=\left(\sum_{i=0}^d\|A_i-A_i'\|_F\right)^{1/2}$, see \cite{dd17}). 

Now, we define (see Eq. (5.4) in \cite{ddd-polys} for symmetric matrix polynomials and the congruence relation)
$$
\bun^{\rm syl}({\cal C}_P):=\bun({\cal C}_P)\cap{\rm GSYL}_{d}^{m\times n}.
$$
Then, $\bun^{\rm syl}({\cal C}_P)$ is also open in its closure. This is a consequence of $\bun({\cal C}_P)$ being open in its closure (Theorem \ref{open_th}). More precisely, one way to see this is the following: Let $M\in\bun^{\rm syl}({\cal C}_P)=\bun({\cal C}_P)\cap{\rm GSYL}_{d}^{m\times n}$. Then, $M(\la)\in\bun({\cal C}_P)$, so there is a neighborhood of $M(\la)$, say $U_M$, such that $U_M\cap\overline\bun({\cal C}_P)\subseteq\bun({\cal C}_P)$. Now
$$
\begin{array}{ccl}
U_M\cap\overline\bun^{\rm syl}({\cal C}_P)&=&U_M\cap \overline{\bun({\cal C}_P)\cap{\rm GSYL}_{d}^{m\times n}}\subseteq U_M \cap \overline\bun({\cal C}_P)\cap{\rm GSYL}_d^{m\times n}\\
&\subseteq&\bun({\cal C}_P)\cap{\rm GSYL}_d^{m\times n}=\bun^{\rm syl}({\cal C}_P),
\end{array}
$$
 so there is also a neighborhood of $M(\la)$ whose intersection with $\overline\bun^{\rm syl}({\cal C}_P)$ is contained in $\bun^{\rm syl}({\cal C}_P)$, which means that $\bun^{\rm syl}({\cal C}_P)$ is open in its closure.
 
Since \eqref{homeo} is a homeomorphism, we have
$$
f^{-1}(\bun^{\rm syl}({\cal C}_P))=\bun(P),\quad\mbox{and}
\quad f^{-1}\left(\overline\bun^{\rm syl}({\cal C}_P)\right)=\overline\bun(P).
$$
(see \cite[p. 1047]{ddd-polys} for symmetric matrix polynomials). As a consequence, the property of $\bun^{\rm syl}({\cal C}_P)$ being open in its closure can be translated to $\bun(P)$. More precisely, let $M(\la)\in\bun(P)$. Then $f(M)={\cal C}_M(\la)\in\bun^{\rm syl}({\cal C}_P)$, and, as we have just seen, there is an open neighborhood of ${\cal C}_M$, say $U_M$, such that $U_M\cap\overline\bun^{\rm syl}({\cal C}_P)\subseteq\bun^{\rm syl}({\cal C}_P)$. Since $f$ is a homeomorphism, $f^{-1}(U_M)$ is an open neighborhood of $M$, say $\widetilde U_M$, so that
$$
f^{-1}\left(U_M\cap\overline\bun^{\rm syl}({\cal C}_P)\right)=\widetilde U_M\cap f^{-1}(\overline\bun^{\rm syl}({\cal C}_P))=\widetilde U_M \cap \overline\bun(P),
$$
and
$$
f^{-1}(U_M\cap\overline\bun^{\rm syl}({\cal C}_P))\subseteq f^{-1}( \bun^{\rm syl}({\cal C}_P))=\bun(P)
$$
together imply that $\widetilde U_M\cap \overline\bun(P)\subseteq\bun(P)$, for every $M(\la)\in{\cal B}(P)$, so $\bun(P)$ is indeed open in its closure.

We have then proved the following result.

\begin{theorem}\label{open-poly_th}
Let $P(\la)\in{\rm POL}_d^{m\times n}$. Then, the bundle of $P(\la)$ (namely, the set of matrix polynomials in ${\rm POL}_d^{m\times n}$ with the same complete spectral information as $P(\la)$, up to the specific values of the distinct eigenvalues) is an open set in its closure.
\end{theorem}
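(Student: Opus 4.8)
\section*{Proof proposal}

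The plan is to reduce the statement to the pencil case already established in Theorem~\ref{open_th} by transporting everything through the first Frobenius companion linearization. First I would invoke the map $f$ of \eqref{homeo} between ${\rm POL}_d^{m\times n}$ and the generalized Sylvester space ${\rm GSYL}_d^{m\times n}$, sending $P(\la)$ to ${\cal C}_P(\la)$; since $f$ is an isometry for the Frobenius-based distance, it is a homeomorphism onto its image. The structural input I would quote from \cite{dd17} is the spectral dictionary between a grade-$d$ matrix polynomial and its companion pencil: two polynomials $P(\la),Q(\la)\in{\rm POL}_d^{m\times n}$ share the same complete spectral information (finite and infinite elementary divisors together with left and right minimal indices) if and only if ${\cal C}_P(\la)$ and ${\cal C}_Q(\la)$ have the same KCF. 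Hence $\bun(P)=f^{-1}\big(\bun({\cal C}_P)\cap{\rm GSYL}_d^{m\times n}\big)=f^{-1}\big(\bun^{\rm syl}({\cal C}_P)\big)$, and, because ${\rm GSYL}_d^{m\times n}$ is an affine-linear (hence closed) subset of the pencil space, also $f^{-1}\big(\overline{\bun^{\rm syl}({\cal C}_P)}\big)=\overline{\bun(P)}$.

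Next I would prove the auxiliary claim that $\bun^{\rm syl}({\cal C}_P)$ is open in its closure inside ${\rm GSYL}_d^{m\times n}$. Fix $M(\la)\in\bun^{\rm syl}({\cal C}_P)\subseteq\bun({\cal C}_P)$. By Theorem~\ref{open_th} there is an open neighbourhood $U_M$ of $M(\la)$ in the pencil space with $U_M\cap\overline\bun({\cal C}_P)\subseteq\bun({\cal C}_P)$. Combining $\overline{A\cap B}\subseteq\overline A\cap\overline B$ with ${\rm GSYL}_d^{m\times n}=\overline{{\rm GSYL}_d^{m\times n}}$ yields
$$
U_M\cap\overline{\bun^{\rm syl}({\cal C}_P)}\ \subseteq\ U_M\cap\overline\bun({\cal C}_P)\cap{\rm GSYL}_d^{m\times n}\ \subseteq\ \bun({\cal C}_P)\cap{\rm GSYL}_d^{m\times n}=\bun^{\rm syl}({\cal C}_P),
$$
so $\bun^{\rm syl}({\cal C}_P)$ is open in its closure relative to the subspace ${\rm GSYL}_d^{m\times n}$.

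Finally I would transport this property back through $f$. Given $M(\la)\in\bun(P)$, its image ${\cal C}_M(\la)=f(M)$ lies in $\bun^{\rm syl}({\cal C}_P)$, so by the previous step there is an open $U$ with $U\cap\overline{\bun^{\rm syl}({\cal C}_P)}\subseteq\bun^{\rm syl}({\cal C}_P)$; then $\widetilde U:=f^{-1}(U)$ is an open neighbourhood of $M(\la)$ and, applying $f^{-1}$ and the closure identity $f^{-1}\big(\overline{\bun^{\rm syl}({\cal C}_P)}\big)=\overline{\bun(P)}$, one gets $\widetilde U\cap\overline\bun(P)\subseteq\bun(P)$. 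As $M(\la)$ was arbitrary, $\bun(P)$ is open in its closure.

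I expect the only genuine obstacle to be the two facts that must be cited rather than reproved: that the companion linearization preserves the full spectral data (finite and infinite elementary divisors and both families of minimal indices), so that $\bun(P)$ corresponds exactly to $\bun^{\rm syl}({\cal C}_P)$; and that ${\rm GSYL}_d^{m\times n}$ is closed, which is what makes intersection with it interact correctly with closures. Both are available from \cite{dd17} and \cite{ddd-polys}, and once they and Theorem~\ref{open_th} are in hand the remainder is a purely formal topological transfer.
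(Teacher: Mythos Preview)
Your proposal is correct and follows essentially the same route as the paper: reduce to the pencil case via the companion linearization and the homeomorphism \eqref{homeo}, show that $\bun^{\rm syl}({\cal C}_P)=\bun({\cal C}_P)\cap{\rm GSYL}_d^{m\times n}$ is open in its closure using Theorem~\ref{open_th} together with $\overline{A\cap B}\subseteq\overline A\cap\overline B$ and the closedness of ${\rm GSYL}_d^{m\times n}$, and then pull the conclusion back through $f$. The only differences are expository---you make explicit why ${\rm GSYL}_d^{m\times n}$ is closed and spell out the spectral dictionary a bit more carefully---but the argument is the same.
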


\section{Conclusions and open questions}\label{conclusion_sec}

This paper is mainly devoted to bundles of matrix pencils under strict equivalence. We have provided necessary and sufficient conditions for the closure of a given bundle to be included in the closure of another one (Theorem \ref{bundom_th}). A proof that bundles are open in their closure is also given (see Theorem \ref{open_th}). The same has been done for bundles of matrices under similarity (in Theorem \ref{open-matrices_th}) and bundles of matrix polynomials of higher degree (in Theorem \ref{open-poly_th}). We have also revisited some notions already present in the literature (like the one of ``coalescence"), as well as some other previously known results. Some additional technical results, that can be useful in the future, have been proved.

Some further lines of research that naturally arise as a continuation of the present work are the following:

\begin{itemize}
    \item We believe that, if $L(\la)$ and $L_i(\la)$ are as in Theorem \ref{union_th}, then $\dim\bun(L_i)<\dim\bun(L)$ for $i\ne 1$, with
    \begin{equation*}\label{dimbundle}\dim\bun(L):=\dim\orb(L)+\#\{\mbox{different eigenvalues of $L(\la)$}\}\end{equation*}
    (see, for instance, \cite{djkvd}). This would formally prove that $\overline{\bun}(L)$ is the union of $\bun(L)$ together with a finite number of bundles with smaller dimension. A natural approach to prove this fact is by using the breakdown of the codimension count of pencil orbits provided in \cite{de95}.
    
    \item Given two matrix polynomials, $P_1(\la)$ and $P_2(\la)$, provide necessary and sufficient conditions for $\overline\bun(P_2)\subseteq\overline\bun(P_1)$.
    
    \item Given two structured matrix pencils (or, more in general, matrix polynomials of higher degree) $L_1(\la)$ and $L_2(\la)$, for any of the structures mentioned in Section \ref{intro_sec} (namely, {\em alternating}, {\em (skew-) Hermitian}, {\em (anti-) palindromic}, or {\em (skew-) symmetric}), provide necessary and sufficient conditions for $\overline\bun(L_2)\subseteq\overline\bun(L_1)$. The bundles in this case should be defined for congruence or $*$-congruence, but the definition in this case will probably deserve a more detailed care, because of the restrictions and symmetries in the spectral information that are imposed by the structure (see, for instance, \cite{4m-good}).
    
    \item To prove whether or not bundles of structured matrix pencils (or matrix polynomials of higher degree) are open in their closure. 
    This is know to be true, for instance, for the bundles of generic symmetric matrix pencils and matrix polynomials with bounded rank, described in \cite{ddd-polys}.
\end{itemize}

\noindent{\bf Acknowledgments.} The authors are very much indebted to Inmaculada de Hoyos and Juan Miguel Gracia for providing the source \cite{tesis-inma}, that has been key to prove Theorem \ref{coalescence_th} and for helpful discussions on this result, as well as for pointing reference \cite{macdonald} out to us.

\end{document}